\DeclareFontFamily{U}{mathb}{\hyphenchar\font45}
\DeclareFontShape{U}{mathb}{m}{n}{
      <5> <6> <7> <8> <9> <10> gen * mathb
      <10.95> mathb10 <12> <14.4> <17.28> <20.74> <24.88> mathb12
      }{}
\DeclareSymbolFont{mathb}{U}{mathb}{m}{n}
\let\dot\relax
\DeclareMathAccent{\dot}{0}{mathb}{"39}
\let\ddot\relax
\DeclareMathAccent{\ddot}{0}{mathb}{"3A}
\let\dddot\relax
\DeclareMathAccent{\dddot}{0}{mathb}{"3B}
\let\ddddot\relax
\DeclareMathAccent{\ddddot}{0}{mathb}{"3C}
\theoremstyle{plain}
\newtheorem*{theorem*}{Theorem}
\newtheorem{theorem}{Theorem}[section]
\newtheorem*{lemma*}{Lemma}
\newtheorem{lemma}[theorem]{Lemma}
\newtheorem*{proposition*}{Proposition}
\newtheorem{proposition}[theorem]{Proposition}
\newtheorem*{corollary*}{Corollary}
\newtheorem*{claim*}{Claim}
\newtheorem*{conjecture*}{Conjecture}
\newtheorem*{question*}{Question}
\theoremstyle{definition}
\newtheorem*{definition*}{Definition}
\newtheorem{definition}[theorem]{Definition}
\newtheorem*{example*}{Example}
\newtheorem*{algorithm*}{Algorithm}
\newtheorem*{remark*}{Remark}
\newtheorem*{remarks*}{Remarks}
\newtheorem{remark}[theorem]{Remark}
\newtheorem*{convention*}{Convention}
\theoremstyle{plain} % just in case the style had changed
\newcommand{\thistheoremname}{}
\newtheorem{genericthm}[theorem]{\thistheoremname}
\newtheorem*{genericthm*}{\thistheoremname}
\newenvironment{namedthm*}[1]
{\renewcommand{\thistheoremname}{#1}%
	\begin{genericthm*}}
	{\end{genericthm*}}
\numberwithin{equation}{section}
\def\al{\alpha}
\def\be{\beta}
\def\ga{\gamma}
\def\de{\delta}
\def\et{\eta}
\def\th{\theta}
\def\rh{\rho}
\def\ta{\tau}
\def\ph{\phi}
\def\ps{\psi}
\def\om{\omega}
\def\Ga{\Gamma}
\def\De{\Delta}
\def\Ph{\Phi}
\def\Ps{\Psi}
\def\N{\mathbb{N}}
\def\R{\mathbb{R}}
\def\Z{\mathbb{Z}}
\def\cB{\mathcal{B}}
\def\cC{\mathcal{C}}
\def\cF{\mathcal{F}}
\def\cG{\mathcal{G}}
\def\cH{\mathcal{H}}
\def\cZ{\mathcal{Z}}
\def\fX{\mathfrak{X}}
\def\p{\partial}
\def\<{\langle}
\def\>{\rangle}
\def\Id{\on{Id}}
\let\on=\operatorname
\newcommand{\sr}[1]%
{\ifmmode{}^\dagger\else${}^\dagger$\fi\ifvmode
\vbox to 0pt{\vss
 \hbox to 0pt{\hskip\hsize\hskip1em
 \vbox{\hsize3cm\raggedright\pretolerance10000
 \noindent #1\hfill}\hss}\vss}\else
 \vadjust{\vbox to0pt{\vss%
 \hbox to 0pt{\hskip\hsize\hskip1em%
 \vbox{\hsize3cm\raggedright\pretolerance10000%
 \noindent #1\hfill}\hss}\vss}}\fi%
}
\providecommand{\mapsfrom}{\kern.2em%
\setbox0=\hbox{$\leftarrow$\kern-.10em\rule[0.26mm]{0.1mm}{1.3mm}}\box0%
\kern.3em}
\title[Besov vector fields and their flows]{On time-dependent Besov vector fields and the regularity of their flows}
\author{David Nicolas Nenning}
\address{D.N.~Nenning: Fakult\"at f\"ur Mathematik, Universit\"at Wien, 
Oskar-Morgenstern-Platz~1, A-1090 Wien, Austria}
\email{david.nicolas.nenning@univie.ac.at}
\begin{document}
	\thanks{The author was supported by FWF-Project P~26735-N25}
	\keywords{Flows of time-dependent vector fields, Besov spaces,  
		continuity of the flow map, ODE-closedness}
	\subjclass[2010]{
	37C10, 46E15, 46T20
	}
	\date{\today}
	
	\maketitle
	
	\begin{abstract}
		We show ODE-closedness for a large class of Besov spaces $\cB^{n,\al,p}(\R^d,\R^d)$, where $n \ge 1,~\al \in (0,1],~p \in [1,\infty]$. ODE-closedness means that \emph{pointwise time-dependent $\cB^{n,\al,p}$-vector fields} $u$ have unique flows $\Ph_u \in \Id+\cB^{n,\al,p}(\R^d,\R^d)$. The class of vector fields under consideration contains as a special case the class of Bochner integrable vector fields $L^1(I, \cB^{n,\al,p}(\R^d,\R^d))$.
		In addition, for $n \ge 2$ and $\al < \be$, we show continuity of the flow mapping $L^1(I,\cB^{n,\be,p}(\R^d,\R^d)) \rightarrow C(I,\cB^{n,\al,p}(\R^d,\R^d)), ~ u \to \Ph_u-\Id$. We even get $\ga$-H\"older continuity for any $\ga < \be - \al$.
	\end{abstract}

	\section{Introduction}
	One frequently used tool in computational anatomy to construct large deformation diffeomorphisms of $\R^d$ consists of solving an ODE
	\begin{align}
	\label{ode}
	\begin{split}
	\p_t \Ph(t) &= u(t,\Ph(t)),\\
	\Ph (0) &= x
	\end{split}
	\end{align}
	for a vector field $u:I \times \R^d \rightarrow \R^d$, which for some fixed time $t_0$, as a function of the initial condition $x$, gives rise to a diffeomorphism. We write $\Ph_u(t_0,x)=x
	+\ph_u(t_0,x)$ for the solution of \eqref{ode} at time $t_0$, and call $\Ph_u(t_0, \cdot)$ the \emph{flow of $u$ at time $t_0$}. For an introduction to this matter, we refer to \cite{Younes10}. \par
	Now assume $u$ has some prescribed regularity with respect to (time and) the spatial variable, then one naturally asks the  
	\emph{(motivating) question}:\par
	\vspace{1mm}
	\begin{center}
		\emph{What can be said about the regularity of $\Ph_u(t_0,\cdot)$?}
	\end{center}
	\vspace{1mm}
	If $\Ph_u$ is of the same regularity as $u$, this leads to the concept of \emph{ODE-closedness}.\par 
	In more detail: Let $E$ be a vector space of functions from $\R^d$ to $\R^d$ and $\cF$ a family of time-dependent vector fields $u:I \times \R^d \rightarrow \R^d$ such that $u(t,\cdot) \in E$ for all $t \in I$. If for all $u \in \cF$ and all $t \in I$, $\ph_u(t,\cdot)$ is again in $E$, we call the space $E$ \emph{ODE-closed}. Of course this notion depends on $\cF$ and becomes stronger if one enlarges the class $\cF$. ODE-closedness has already been studied for many spaces, e.g.
	\begin{itemize}
		\item $C^n_0$, in \cite{Younes10},
		\item H\"older spaces, in \cite{NenningRainer16},
		\item Sobolev spaces, in \cite{BruverisVialard14},
		\item several classes of (un)weighted smooth function spaces, in \cite{NenningRainer17}, \dots
	\end{itemize}
	Similar questions have also been considered for several classes of (infinite dimensional) Lie groups in \cite{Glockner16}.\par 
	The main goal of this paper is to show that Besov spaces $B^s_{\infty,p}(\R^d,\R^d)$ with $s > 1$ and $~p \in [1,\infty]$, or in our notation $\cB^{n,\al,p}(\R^d,\R^d)$, where $n = \lfloor s \rfloor, \al = s-\lfloor s \rfloor$, are ODE-closed (with respect to the class of pointwise time-dependent Besov vector fields). On the one hand (for $\al \in (0,1)$, $p < \infty$) this generalizes results from \cite{NenningRainer16}, on the other hand it contains as a special case (for $\al=1, ~p = \infty$) ODE-closedness of \emph{Zygmund spaces}.\par 
	We write $\fX_{n,\al,p}$ for the family of pointwise time-dependent Besov vector fields, which consists of all functions $u:I\times \R^d \rightarrow \R^d$ such that
	\begin{itemize}
		\item $u(t, \cdot) \in \cB^{n,\al,p}(\R^d,\R^d)$ for every $t \in I$,
		\item $u(\cdot,x)$ is measurable for every $x\in \R^d$,
		\item $I \ni t \to \|u(t,\cdot)\|_{\cB^{n,\al,p}(\R^d,\R^d)}$ is (Lebesgue) integrable. 
	\end{itemize}
	That this is a rather large class is illustrated by the fact that it contains $L^1(I,\cB^{n,\al,p}(\R^d,\R^d))$, the space of Bochner integrable vector fields into $\cB^{n,\al,p}(\R^d,\R^d)$, as a proper subset. Since we only require integrability with respect to time, we have to weaken the concept of solution of \eqref{ode}, i.e. we say $\Ph$ solves \eqref{ode} if it is an absolutely continuous function on $I$ and satisfies
	\begin{equation}
	\label{intode}
		\Ph(t) = x + \int_0^t u(s,\Ph(s))\,ds
	\end{equation}
	for all $t \in I$. \par 
	We give a precise answer to the motivating question for Besov spaces in our main theorem.
	\begin{namedthm*}{Theorem 1}
		\label{zygodeclosed}
		For all $n\in \N_{\ge 1},~\al \in (0,1],~ p \in [1,\infty]$, the Besov space $\cB^{n,\al,p}(\R^d,\R^d)$ is ODE-closed. For a fixed $u \in \fX_{n,\al,p}$, the mapping $t \mapsto \ph_u(t, \cdot)$ belongs to $C(I,\cB^{n,\al,p}(\R^d,\R^d))$.	
	\end{namedthm*}

	The proof of Theorem 1 heavily depends upon embedding and composition properties of Besov spaces. Using those, we are able to derive from \eqref{intode}, with the help of known results for H\"older spaces from \cite{NenningRainer16}, an integral inequality for the Besov norm of $\Ph_u$. To this inequality, we apply Gronwall's inequality and get uniform bounds. By a similar reasoning we are able to show continuity with respect to time. \par 
	
	Restricting to Bochner integrable vector fields, we are also able to show continuity results with respect to the vector field. This is the content of our second result.
	
	\begin{namedthm*}{Theorem 2}
		Let $0<\al < \be \le 1$ and $n \in \N_{\ge 2}$. Then 
		\[
		\on{Fl}: L^1(I,\cB^{n,\be,p}) \rightarrow C(I,\cB^{n,\al,p}), ~ u \mapsto \ph_u
		\]
		is continuous, even H\"older continuous of any order $\ga < \be-\al$.
	\end{namedthm*}

	\begin{remark}[A possible application]
		Theorems 1 and 2 may aid as a tool to study the \emph{geodesic problem} on (the group of) Besov diffeomorphisms
		\[
		\on{Diff}\cB^{n,\al,p}:= \{\Ph \in \Id+\cB^{n,\al,p}: \inf_{x \in \R^d} \det d_x\Ph(x)>0\},
		\]
		which is a Banach manifold modelled on the open set $\on{Diff}\cB^{n,\al,p} - \Id \subseteq \cB^{n,\al,p}$.
		Due to Theorem 1, the so-called \emph{Trouv\'e group} $\cG_{n,\al,p}:=\{\Ph_u(1,\cdot):u\in \fX_{n,\al,p}\}$, cf. \cite{NenningRainer17} for more details, satisfies
		$
		\cG_{n,\al,p} \subseteq (\on{Diff}\cB^{n,\al,p})_0,
		$
		where $(\cdot)_0$ denotes the connected component of the identity. It was shown in \cite{NenningRainer16} for $\al \in (0,1)$ that $\on{Diff}\cB^{n,\al,\infty}$ is a group with smooth right translations but only locally bounded left translations and inversion (observe that Lemma \ref{normequiv} gives that $\cB^{n,\al,\infty}$ coincides with the H\"older space of order $(n,\al)$). But this was enough to show $(\on{Diff}\cB^{n,\al,\infty})_0 \subseteq \cG_{n,\al,\infty}$. Adapting the proofs for $p < \infty $ and $\al = 1$ shouldn't cause too many problems. So let us assume for now we already know these regularity statements for $p < \infty$.\par 
		Then putting an inner product (or more generally an asymmetric norm) $g$ on $\cB^{n,\al,p}$, one may extend it to a right invariant Riemannian metric (or Finsler metric) $G$ on the tangent bundle $T \on{Diff}\cB^{n,\al,p}= \on{Diff}\cB^{n,\al,p} \times \cB^{n,\al,p}$ via $G(\Ph,f) = G(\Id, f \circ \Ph^{-1}):= g( f \circ \Ph^{-1})$. Observe that in general this only yields a locally bounded metric. Now the geodesic problem consists of finding curves of minimal length connecting two given diffeomorphisms $\Ph, ~\Ps \in \on{Diff}\cB^{n,\al,p}$, i.e.: Define $\on{dist}(\Ph,\Ps):= \inf\{l(\ga): \ga \in \Ga_{\Ph,\Ps} \}$, where $\Ga_{\Ph,\Ps}$ is the set of piecewise smooth curves $\ga:[0,1]\rightarrow \on{Diff}\cB^{n,\al,p}$ with $\ga(0)=\Ph$ and $\ga(1)= \Ps$, and
		\[
		l(\ga):=\int_0^1 G(\ga(t),\dot{\ga}(t)) \, dt.
		\]
		And right invariance gives $l(\ga)= \int_0^1 G(\Id, \dot{\ga}(t) \circ \ga(t)^{-1})\,dt$, which yields due to Theorem 1
		\[
		\on{dist}(\Ph,\Ps)= \inf\bigg\{ \int_0^1 G(\Id, u(t))\,dt: u \in \fX_{n,\al,p}, \Ps = \Ph_u(1,\cdot) \circ \Ph \bigg\}.
		\]
		Thus one can translate the problem of finding curves of minimal length to finding minimal vector fields (with respect to $G$). But there are several obstructions to overcome, e.g.: It is not clear whether the translation process is continuous (cf. Theorem 2). In addition it is also not clear which right-invariant metrics are smooth (the construction a priori only gives locally bounded metrics). 	
	\end{remark}

	The paper is structured as follows. In Section 2 we fix notation and introduce the spaces under consideration. In Section 3 we discuss continuous inclusion of Besov spaces and give elementary proofs. In Section 4 we formulate the necessary regularity results concerning composition. Finally, in Section 5 and 6, we  prove our main results, namely Theorems 1 and 2.

	\subsection*{Acknowledgments} I would like to express my gratitude to Armin Rainer for many helpful discussions throughout the preparation of this article.
		
	\section{Preliminaries}
	\subsection{Notation}
	In what follows $\N=\{0,1,2,\dots\}$ and $\N_{\ge 1}= \N \backslash \{0\}$. For two Banach spaces $E,F$, the space of $n$-times continuously Fr\'echet differentiable mappings from $E$ to $F$ shall be denoted by $C^n(E,F)$. For $f \in C^n(E,F)$, we write for the $k$-th derivative $f^{(k)}=D^k f = d^kf=d^k_x f$ which then is an element of $C^{n-k}(E, L_k(E;F))$. For $E_1, \dots, E_k$ Banach spaces, $L_k(E_1,\dots,E_k;F)$ is the space of bounded $k$-linear mappings from $E_1\times \cdots \times E_k \rightarrow F$ endowed with the norm
	\[
	\|T\|_{L_k(E_1,\dots, E_k;F)}:= \sup_{h_i \in E_i, h_i \neq 0}\frac{\|T(h_1,\dots,h_k)\|}{\|h_1\|\cdots\|h_k\|}.
	\]
	If $E_i=E$ for all $i$, we simply write $L_k(E;F)$. As usual $\|\cdot\|_\infty$ shall denote the sup-norm of a function. We set
	\[
	C^n_b(E,F):= \{f \in C^n(E,F): \|f\|_{C^n_b}:=\sup_{0\le k \le n}\|f^{(k)}\|_\infty  < \infty  \}.
	\]
	\subsection{Besov spaces}
	Besov spaces are one way of filling the gap between $C^{n+1}_b(E,F)$ and $C^{n}_b(E,F)$. But before we can define them, we need some additional notation.\par 
	For a function $f:E\rightarrow F$, we set $\De^k_{h}f(x)$ to be the $k$-th difference of $f$ at $x$ by $h$. They are defined inductively by
	\[
	\De^1_{h}f(x):= f(x+h)-f(x), ~\De^{k+1}_{h}f(x):= \De^1_h \De^k_hf(x),
	\]
	especially $\De^2_h f(x)= f(x+2h)-2f(x+h)+f(x)$. Having this, we set
	\[
	\om(f;t):=\sup_{ x \in E, \|h\|\le t} \|\De^1_h f(x) \|
	\]
	and
	\[
	\et(f;t):=\sup_{ x \in E, \|h\|\le t} \|\De^2_h f(x)\|.
	\]
	For $p \in [1,\infty)$, $\al \in (0,1]$ 
	\[
	Z^{\al,p}(f):= \bigg(\int_0^\infty \bigg(\frac{\et(f;t)}{t^\al}\bigg)^p \frac{dt}{t}\bigg)^{1/p},
	\]
	and
	\[
	Z^{\al,\infty}(f):= \sup_{t>0} \frac{\et(f;t)}{t^\al}.
	\]
	In addition $H^{\al,p}(f)$ is defined by replacing $\et$ with $\om$. We set
	\[
	\cB^{n,\al,p}(E,F):=\bigg\{f \in C^n(E,F): \|f\|^\cZ_{n,\al,p}:= \sup\{ \|f\|_{C^n_b(E,F)}, Z^{\al,p}(f^{(n)}) \}<\infty \bigg\},
	\]
	the \emph{$p$-Besov space of order $(n,\al)$}. For $p = \infty$, we also write $\cZ^{n,\al}(E,F)$ and call it the \emph{Zygmund space of order $(n,\al)$}. Furthermore we introduce
	\[
	\cB^{n,\al,p}_\cH(E,F) := \bigg\{f \in C^n(E,F): \|f\|^\cH_{n,\al,p}:= \sup\{ \|f\|_{C^n_b(E,F)}, H^{\al,p}(f^{(n)}) \}<\infty \bigg\}.
	\]
	For $p = \infty$, we also write $\cC^{n,\al}(E,F)$ and call it the \emph{H\"older space of order $(n,\al)$}. Actually the spaces $\cB^{n,\al,p}(E,F)$ and $\cB^{n,\al,p}_\cH(E,F)$ are not equal only for $\al = 1$. If domain and codomain are clear from the context, we sometimes omit mentioning them explicitly, e.g. $\cB^{n,\al,p}$ instead of $\cB^{n,\al,p}(E,F)$. 
	
	\begin{remark}
		For $E=\R^d,~F = \R$, and $q<\infty$, one can also consider $\et_q(f;t):= \sup_{\|h\|\le t} \|\De^2_h f\|_{L^q}$, i.e. $\et_\infty$ coincides with $\et$ as defined above, and define $Z^{\al,p}_q(f)$ as integral over $\et_q(f;t)$. Then $\cB^{n,\al,p}_q(\R^d,\R)$ shall denote the space of $n$-times weakly differentiable functions whose derivatives are in $L^q$ and such that $Z^{\al,p}_q(f^{(n)})$ is finite. This then even gives a more (or the most) general notion of Besov space. Since we work with pointwise estimates, we restrict ourselves to the case $q = \infty $.
	\end{remark}

\subsection{Fa\`a di Bruno's formula}
We shall make frequent use of Fa\`a di Bruno's formula, whose Banach space version reads as follows.

\begin{proposition}
	Let $E,F,G$ be Banach spaces, let $f: E \supseteq U  \rightarrow F$ and $g: F \supseteq V \rightarrow G$ 
	be $k$ times Fr\'echet differentiable, and assume $f(U)\subseteq V$. Then $g\circ f:U \rightarrow G$ is $k$ 
	times Fr\'echet differentiable, and for all $x \in U$,		
	\begin{align}
	\label{eq:FaadiBruno}
	d^k(g\circ f)(x) = \on{sym}
	\sum_{l=1}^{k} \sum_{ \ga \in \Ga(l,k)} c_{\ga} g^{(l)}(f(x))\left(f^{(\ga_1)}(x), \dots, f^{(\ga_l)}(x)\right),
	\end{align}
	where $\Ga(l,k) := \{\ga \in \N_{>0}^l : |\ga| = k\}$,
	$c_{\ga} := \frac{k!}{l! \ga!}$, and $\on{sym}$ denotes symmetrization of multilinear mappings.
\end{proposition}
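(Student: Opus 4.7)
The plan is to proceed by induction on $k$. The base case $k=1$ is the chain rule: $\Gamma(1,1)=\{(1)\}$, $c_{(1)}=1$, and symmetrization is trivial on a $1$-linear map, so the right-hand side collapses to $g^{(1)}(f(x))(f^{(1)}(x))=d(g\circ f)(x)$.

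For the inductive step, assuming \eqref{eq:FaadiBruno} at order $k$, I would differentiate both sides once more. Each summand of the right-hand side, $c_{\gamma}\,g^{(l)}(f(x))\bigl(f^{(\gamma_{1})}(x),\ldots,f^{(\gamma_{l})}(x)\bigr)$, is the composition of the bounded multilinear evaluation $(T,v_1,\ldots,v_l)\mapsto T(v_1,\ldots,v_l)$ with the map $x\mapsto\bigl(g^{(l)}(f(x)),f^{(\gamma_{1})}(x),\ldots,f^{(\gamma_{l})}(x)\bigr)$. The product rule and the chain rule applied to the inner $g^{(l)}\circ f$ produce $l+1$ new contributions: one of \emph{type (A)}, in which $g^{(l)}$ becomes $g^{(l+1)}$ and an extra factor $f^{(1)}(x)$ is inserted into the argument list (so the new multi-index lies in $\Gamma(l+1,k+1)$ and carries an extra coordinate equal to $1$); and $l$ terms of \emph{type (B)}, in which a single $f^{(\gamma_i)}$ is replaced by $f^{(\gamma_i+1)}$ (so the multi-index stays in $\Gamma(l,k+1)$ with one coordinate incremented).

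The main obstacle --- and in fact the only nontrivial point --- is the combinatorial identity that, after the global symmetrization, the coefficients gathered from all parents of a given target $\gamma'\in\Gamma(l',k+1)$ sum to $c_{\gamma'}=(k+1)!/(l'!\,\gamma'!)$. Because $\on{sym}$ identifies multi-indices with the same underlying multiset, the bookkeeping is cleanest at the level of partitions $\mu$ of $k+1$ with $l'$ parts and multiplicities $(m_j)_{j\geq 1}$. Type (A) can only produce a target with $m_1\geq 1$, and its total weight turns out proportional to $m_1\cdot k!$ (coming from the unique parent partition $\mu\setminus\{1\}$ and the different positions in which a $1$ can be inserted, up to symmetrization). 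Type (B) contributes, for each $j\geq 2$ with $m_j\geq 1$, a total weight proportional to $j\,m_j\cdot k!$ (coming from the parent partition obtained by replacing one part equal to $j$ by $j-1$, weighted by the number of coordinates of that parent that can be incremented to $j$). Summing assembles the factor $m_1+\sum_{j\geq 2}j\,m_j=|\mu|=k+1$, which promotes $k!$ to $(k+1)!$ and reproduces $c_{\mu}$ after dividing by the symmetrization normalizations; this closes the induction.

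A cleaner alternative I would consider is to bypass the induction entirely via Taylor's theorem: write $g(f(x+h))=g(f(x)+\Delta)=\sum_{l\geq 0}\tfrac{1}{l!}g^{(l)}(f(x))(\Delta,\ldots,\Delta)+o(\|\Delta\|^k)$ with $\Delta=\sum_{j\geq 1}\tfrac{1}{j!}f^{(j)}(x)(h,\ldots,h)+o(\|h\|^k)$, expand the $l$-th power by the multinomial theorem, and collect terms of order $k$ in $h$; uniqueness of the symmetric $k$-linear Taylor coefficient then recovers \eqref{eq:FaadiBruno}. This replaces the induction by a purely algebraic matching of coefficients but carries the same combinatorial content.
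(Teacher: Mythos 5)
The paper does not prove this proposition at all: it is recalled as a known fact (the Banach-space Fa\`a di Bruno formula) and used as a black box, so there is no ``paper proof'' to compare against. Your induction is correct and is the standard argument. The base case is the chain rule; the inductive step correctly produces, from a parent $\ga\in\Ga(l,k)$, one type-(A) child (appending a coordinate equal to $1$, with $g^{(l)}$ promoted to $g^{(l+1)}$ via the chain rule applied to $g^{(l)}\circ f$) and $l$ type-(B) children (incrementing one coordinate); and your coefficient count checks out: grouping by the underlying partition $\nu$ of $k+1$ with multiplicities $(n_j)$, the type-(A) parent contributes $\tfrac{n_1}{k+1}C_\nu$ and the type-(B) parent for a part $j\ge 2$ contributes $\tfrac{j\,n_j}{k+1}C_\nu$, so the total is $\tfrac{1}{k+1}\bigl(n_1+\sum_{j\ge2}j\,n_j\bigr)C_\nu=C_\nu$, exactly as you assert. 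Two small points worth tightening in a full write-up: (i) you should note that the derivative of the symmetrized order-$k$ expression is only symmetric in the first $k$ slots, and that applying $\on{sym}_{k+1}$ at the end is justified because $d^{k+1}(g\circ f)(x)$ is itself symmetric; (ii) your Taylor-expansion alternative identifies the formula but does not by itself establish that $g\circ f$ is $k$ times Fr\'echet differentiable --- uniqueness of the degree-$k$ Taylor coefficient only applies once existence of $d^k(g\circ f)(x)$ is known, so the inductive differentiability argument cannot be fully bypassed.
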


\section{Inclusion of Besov spaces} In this section we summarize inclusion relations of Besov spaces relevant to our forthcoming analysis. The results are essentially all known, but we nevertheless include proofs. First of all for the readers convenience and secondly since we only want to use the definition of Besov spaces given above. In the literature most of the proofs are based on some different, but equivalent, definition.

\subsection{Relation of $\cB^{n,\al,p}$ and $\cB_\cH^{n,\al,p}$}

	\begin{lemma}
		\label{normequiv}
		For $\al \in (0,1)$ and $f \in C_b(E,F)$
		\begin{equation}
		\label{eq:normequiv}
		\frac{1}{2} Z^{\al,p}(f) \le H^{\al,p}(f) \le \frac{1}{1-2^{\al-1}} Z^{\al,p}(f),
		\end{equation}
		where the l.h.s. inequality also holds for $\al = 1$. This implies 
		for $\al \in (0,1)$
		\[
		\cB^{n,\al,p}_\cH \cong \cB^{n,\al,p},
		\]
		and
		\[
		\cB^{n,1,p}_\cH \hookrightarrow \cB^{n,1,p}.
		\]
		But in general $\cB^{n,1,p}_\cH \subsetneq \cB^{n,1,p}$.
	\end{lemma}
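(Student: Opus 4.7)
Both inequalities stem from the algebraic identity
\[
\De^1_{2h}f(x) = \De^2_h f(x) + 2\,\De^1_h f(x),
\]
which follows by direct expansion. Read as $\De^2_h f(x) = \De^1_h f(x+h) - \De^1_h f(x)$, it yields $\et(f;t) \le 2\om(f;t)$ for every $t>0$ and every $\al \in (0,1]$; taking $L^p$-norms of $\et(f;t)/t^\al$ against $\tfrac{dt}{t}$ (or the supremum when $p=\infty$) gives the left inequality $Z^{\al,p}(f) \le 2\, H^{\al,p}(f)$.

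Read instead as $\De^1_h f(x) = \tfrac{1}{2}\bigl[\De^1_{2h}f(x) - \De^2_h f(x)\bigr]$, the identity produces $\om(f;t) \le \tfrac{1}{2}\om(f;2t) + \tfrac{1}{2}\et(f;t)$. Iterating $N$ times gives
\[
\om(f;t) \le \frac{1}{2^N}\om(f;2^N t) + \sum_{k=0}^{N-1}\frac{\et(f;2^k t)}{2^{k+1}},
\]
and since $f \in C_b$ forces $\om(f;s) \le 2\|f\|_\infty$ for all $s$, the first summand vanishes as $N\to\infty$. Dividing the resulting series $\om(f;t) \le \sum_{k\ge 0} 2^{-k-1}\et(f;2^k t)$ by $t^\al$, applying Minkowski's inequality in $L^p\bigl((0,\infty),\tfrac{dt}{t}\bigr)$, and substituting $s = 2^k t$ in the $k$-th term (using dilation-invariance of $\tfrac{dt}{t}$) reduces the bound to $\sum_{k\ge 0} 2^{k(\al-1)-1}\, Z^{\al,p}(f)$. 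Convergence of this geometric series requires exactly $\al < 1$ --- this is the main obstacle, and the reason the right inequality fails at the endpoint --- and produces the claimed constant $\tfrac{1}{1-2^{\al-1}}$.

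Applying these two inequalities to $f^{(n)}$ makes the norms $\|\cdot\|^\cZ_{n,\al,p}$ and $\|\cdot\|^\cH_{n,\al,p}$ comparable when $\al \in (0,1)$, whence $\cB^{n,\al,p}_\cH \cong \cB^{n,\al,p}$, and yields only the continuous embedding $\cB^{n,1,p}_\cH \hookrightarrow \cB^{n,1,p}$ at $\al = 1$. To see that this embedding is strict, I would exhibit a witness in $\cB^{n,1,p}\setminus \cB^{n,1,p}_\cH$: it suffices to find a compactly supported $g:\R\to\R$ lying in the one-dimensional Zygmund class but not in Lipschitz --- the classical example being a smooth cutoff of the lacunary series $\sum_{k\ge 1} 2^{-k}\sin(2^k x)$, for which the second-difference estimate gives a Zygmund bound while the formal derivative at $0$ already diverges. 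Then $n$-fold antidifferentiation, together with tensoring by coordinate functions, lifts $g$ to an example $\R^d \to \R^d$ at the required differentiability level.
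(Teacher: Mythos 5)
Your proof of the two inequalities in \eqref{eq:normequiv} is correct and rests on the same identity $\De^1_{2h}f = \De^2_h f + 2\De^1_h f$ that drives the paper's argument. The only organisational difference is that you pass to the limit $N\to\infty$ pointwise in $t$, obtaining the series bound $\om(f;t)\le\sum_{k\ge 0}2^{-k-1}\et(f;2^k t)$, and then apply the countable triangle inequality in $L^p\bigl((0,\infty),\tfrac{dt}{t}\bigr)$; the paper instead keeps the iteration finite, truncates the integral at $\ta\ge\de$, absorbs the leftover term $2^{-N}\De^1_{2^N h}f$ via $\|f\|_\infty$, and lets $\de\to 0$. Both arguments use $f\in C_b$ in the same way to kill the remainder, and yours is arguably cleaner; note that your series actually sums to $\sum_{k\ge 0}2^{k(\al-1)-1}=\tfrac{1}{2}\cdot\tfrac{1}{1-2^{\al-1}}$, i.e.\ half the stated constant, which only strengthens the claim. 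The deductions $\cB^{n,\al,p}_\cH\cong\cB^{n,\al,p}$ for $\al\in(0,1)$ and $\cB^{n,1,p}_\cH\hookrightarrow\cB^{n,1,p}$ then follow exactly as you say.

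The strictness argument, however, has a genuine gap for $p<\infty$. Your witness --- a cutoff lacunary series $g$ with $\et(g;t)\asymp t$, lifted by $n$-fold antidifferentiation --- lives in $\cB^{n,1,\infty}\setminus\cB^{n,1,\infty}_\cH$ and settles only the case $p=\infty$. For finite $p$ it does not even belong to the larger space: since $\et(g;t)/t$ is bounded below near $t=0$, one has $\int_0^1\bigl(\et(g;t)/t\bigr)^p\,\tfrac{dt}{t}=\infty$, so $g\notin\cB^{0,1,p}$ and its antiderivatives are not in $\cB^{n,1,p}$. The relevant observation for $p<\infty$ is that the \emph{smaller} space degenerates: if $g$ is bounded, continuous and non-constant, then splitting a fixed increment into $\lceil |b-a|/\ta\rceil$ steps of length $\le\ta$ gives $\om(g;\ta)\ge c\ta$ for small $\ta$, whence $H^{1,p}(g)=\infty$; thus $\cB^{n,1,p}_\cH(\R,\R)$ contains only constants, and any non-constant element of $\cB^{n,1,p}$ (a smooth bump, or the truncated identity of Remark \ref{rem:ex}, pushed up to level $n$ via the embedding $\cB^{1,\al,\infty}\hookrightarrow\cB^{0,1,p}$ of Proposition \ref{embedding}) already witnesses strictness. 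So the conclusion survives for all $p$, but the $p<\infty$ case requires this separate, easier argument rather than the lacunary example.
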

	
	\begin{proof}
		$Z^{\al,p}(f) \le 2H^{\al,p}(f)$ for $\al \in (0,1]$ follows easily by applying the triangle inequality. This already shows $\cB^{n,\al,p}_\cH \hookrightarrow \cB^{n,\al,p}$.\par 
		We are left to show the second inequality in \eqref{eq:normequiv}. So now let $\al \in (0,1)$ and assume $Z^{\al,p}(f)<\infty$ (otherwise there is nothing to prove) for $p < \infty$. 

		We need the following observation: For every $n \in \N_{\ge 1}$ and $0 \le k \le n-1$
		\begin{align*}
		\begin{split}
			\bigg(\int_0^\infty& \bigg(\frac{\sup_{x\in E, \|h\|\le \ta} \|2^k(\De^2_{2^{n-k-1}h}f(x))\|}{\ta^\al}\bigg)^p\frac{d\ta}{\ta}\bigg)^{1/p} \le 2^{k+(n-k-1)\al}Z^{\al,p}(f),
		\end{split}
		\end{align*}
		which follows immediately via the change of variables $s = 2^{n-k-1}\ta$. This implies 
		\begin{align}
		\begin{split}
		\label{eq:aux1}
		\bigg(\int_0^\infty& \bigg(\frac{\sup_{x\in E, \|h\|\le \ta} \|f(x+2^n h)-f(x)-2^n(f(x+h)-f(x))\|}{\ta^\al}\bigg)^p\frac{d\ta}{\ta}\bigg)^{1/p}\\
		&=\bigg(\int_0^\infty \bigg(\frac{\sup_{x\in E, \|h\|\le \ta} \| \sum_{k = 0}^{n-1} 2^k\De^2_{2^{n-k-1}h} f(x) \|}{\ta^\al}\bigg)^p\frac{d\ta}{\ta}\bigg)^{1/p}\\
		&\le \bigg(\sum_{k=0}^{n-1}2^{k+(n-k-1)\al}\bigg)Z^{\al,p}(f) \le 2^{n-1} \frac{1}{1-2^{\al-1}} Z^{\al,p}(f).
		\end{split}
		\end{align}
		
		Now let $\de>0$ be fixed and choose $n \in \N$ such that $\frac{1}{2^n} \le \frac{(\al p)^{1/p} \de^{1+\al}}{2\|f\|_\infty}$, then 
		\begin{align*}
			&\bigg(\int_{\de}^{\infty} \bigg(\frac{\om(f;\ta)}{\ta^\al}\bigg)^p\frac{d\ta}{\ta}\bigg)^{1/p}\\
			&\le \frac{1}{2^n} \bigg(\int_0^\infty \bigg(\frac{\sup_{x\in E, \|h\|\le \ta} \|f(x+2^n h)-f(x)-2^n(f(x+h)-f(x))\|}{\ta^\al}\bigg)^p\frac{d\ta}{\ta}\bigg)^{1/p}\\
			&\qquad +  \frac{1}{2^n} \bigg( \int_{\de}^{\infty} \bigg(\frac{\sup_{x \in E, \|h\|\le \ta}\|f(x+2^n h)-f(x)\|}{\ta^\al}\bigg)^p\frac{d\ta}{\ta}\bigg)^{1/p}\\
			&\le \frac{1}{1-2^{\al-1}} Z^{\al,p}(f) + \frac{1}{2^n}2\|f\|_\infty \bigg(\int_{\de}^{\infty} \frac{1}{\ta^{\al p + 1}}\,d\ta\bigg)^{1/p}\\
			&= \frac{1}{1-2^{\al-1}} Z^{\al,p}(f) + \frac{1}{2^n} \frac{2\|f\|_\infty}{(\al p)^{1/p} \de^\al} \le \frac{1}{1-2^{\al-1}} Z^{\al,p}(f) + \de,
		\end{align*}
		and as $\de$ tends to $0$ in the above inequality, we get the desired result. The proof for $p = \infty$ works analogously.\par 
		
		Finally the famous Weierstra{\ss} function 
		$
		f_b(x):= \sum_{k=0}^\infty b^{-k}\cos( b^k \pi x),
		$
		originally only considered for certain integer values of $b$ and by Hardy for arbitrary $b>1$ (cf. \cite{Hardy16}),
		is an element of $\cB^{0,1,\infty}(\R,\R)$ but nowhere differentiable and thus by Rademacher's theorem not Lipschitz and therefore not in $\cB^{0,1,\infty}_\cH$. This shows $\cB^{0,1,\infty}_\cH(\R,\R) \subsetneq \cB^{0,1,\infty}(\R,\R)$.\par 
		By multiplying $f_b$ with a smooth cutoff function and integrating, we get a function in $\cB^{1,1,\infty}(\R,\R) \backslash \cB^{1,1,\infty}_\cH(\R,\R)$. And iterating this procedure yields strict inclusions for parameters $(n,1,\infty)$. In order to get $\cB^{n,1,p}_\cH(\R,\R) \subsetneq \cB^{n,1,p}(\R,\R)$ for $p < \infty$, we may use the function from Remark \ref{rem:ex}:\par 
		There we construct $f \in \cB^{1,\al,\infty}_\cH(\R,\R) \backslash \bigcup_{1\le p < \infty}\cB^{0,1,p}_\cH(\R,\R)$. And since $\cB^{1,\al,\infty}_\cH(\R,\R) \hookrightarrow \cB^{1,\al,\infty}(\R,\R)  \hookrightarrow \cB^{0,1,p}(\R,\R)$ for any $1 \le p \le \infty$, where the second inclusion is due to Proposition \ref{embedding}, we immediately get $\cB^{0,1,p}_\cH(\R,\R)\subsetneq \cB^{0,1,p}(\R,\R)$. The construction for $n \ge 1$ is analogous to the case $p = \infty$ above.
	\end{proof}
	
	\begin{remark}
		By definition a continuous increasing function $\rh:[0,\infty] \rightarrow [0,\infty]$ with $\rh(0)=0$ is a \emph{modulus of continuity} of a function $f:E \rightarrow F$ iff  $\om(f;t) \le \rh(t)$ for all $t\ge 0$. Now it is clear by definition that any $f \in \cC^{0,1}(E,F)$ admits a modulus of continuity $\rh(t)=Ct$ for some $C>0$, whereas a function $g \in \cZ^{0,1}(\R,\R)$ in general only admits a modulus of continuity $\rh(t)=Dt |\log(t)|$ for some $D>0$, cf. \cite[Theorem 10]{Zygmund45}. This is of course less restrictive.
	\end{remark}
	
	\subsection{Relation of $\cB^{n,\be,q}$ and $\cB^{m,\al,p}$}
	Our forthcoming analysis will heavily depend upon the following embedding theorem, which corresponds to \cite[ch. 3, Theorem 4]{Peetre76}. 
	
	\begin{proposition}
		\label{embedding}
		Let $E,F$ be Banach spaces, and $m,n \in \N$ and $\al,\be \in (0,1]$ be such that $m+\al < n+\be$. Then
		\begin{equation}
		\label{regdec}
		\cB^{n,\be,q}(E,F) \hookrightarrow \cB^{m,\al,p}(E,F)
		\end{equation}
		for any $p,q \in [1,\infty]$. In addition, for $1 \le q \le p \le \infty$
		\begin{equation}
		\label{intinc}
		\cB^{n,\al,q}(E,F) \hookrightarrow \cB^{n,\al,p}(E,F).
		\end{equation}
		There even is a constant $C$ independent of $n,\al,p,q$ such that
		\[
		Z^{\al,p}(f^{(n)}) \le C Z^{\al,q}(f^{(n)}),
		\]
		i.e. $\|f\|_{n,\al,p}\le C \|f\|_{n,\al,q}$ independent of all parameters. The constant for the embedding \eqref{regdec} depends on $n,m,\al,\be$.
	\end{proposition}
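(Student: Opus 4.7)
The plan is to pass to a dyadic characterization of $Z^{\al,p}$ and thereby reduce both claims to weighted sequence-space inclusions. Using that $t\mapsto\et(g;t)$ is non-decreasing and that the measure $dt/t$ is invariant under multiplicative dilations, I would split $\int_0^\infty$ over the geometric annuli $[2^k,2^{k+1}]$ and sandwich $\et(g;t)/t^\al$ between its values at the endpoints to obtain
\[
Z^{\al,p}(g)^p \asymp \sum_{k\in\Z}\bigl(2^{-k\al}\et(g;2^k)\bigr)^p,
\]
with equivalence constants bounded uniformly in $\al\in(0,1]$ and $p\in[1,\infty)$ (thanks to $2^\al\le 2$); the case $p=\infty$ is analogous with suprema in place of sums.

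The embedding \eqref{intinc} follows at once, since $\ell^q\hookrightarrow\ell^p$ with constant $1$ for $q\le p$, so the dyadic equivalence yields $Z^{\al,p}(f^{(n)})\le C\,Z^{\al,q}(f^{(n)})$ with $C$ depending only on the equivalence constants, hence independent of $n,\al,p,q$. For \eqref{regdec} I would first reduce to $m=0$ via the tautological bounded embedding $f\mapsto f^{(m)}\colon\cB^{n,\be,q}(E,F)\to\cB^{n-m,\be,q}(E,L_m(E;F))$, which is evident from the definition of the norm. Writing $N:=n-m$, $g:=f^{(m)}$, and $b_k:=\et(g;2^k)$, the task becomes controlling $\sum_k (2^{-k\al}b_k)^p$ from $\|g\|_{C^N_b}<\infty$ and $Z^{\be,q}(g^{(N)})<\infty$ under the gap hypothesis $\al<N+\be$. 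At large scales $k\ge 0$ the bound $b_k\le 4\|g\|_\infty$ combined with the weight $2^{-k\al p}$ (using $\al>0$) gives summability. At small scales $k<0$ I would distinguish $N\ge 1$, where Taylor's formula delivers $b_k\lesssim 2^{jk}$ with $j=\min(N,2)$ so that the geometric tail converges as long as $j>\al$, and $N=0$, where only the Besov hypothesis is available and one rewrites $2^{-k\al}b_k=2^{k(\be-\al)}\cdot 2^{-k\be}b_k$ to exploit the gap $\be-\al>0$ via H\"older's inequality against the finite $\ell^q$-sequence $(2^{-k\be}b_k)_k$.

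The main obstacle I anticipate is the borderline configurations where the crude Taylor bound $b_k\lesssim 2^{jk}$ just matches $2^{k\al}$ (essentially $N=1$, $\al=1$), so the geometric series at small scales fails to converge on its own. There I would upgrade to $b_k\le 2^k\om(g^{(1)};2^k)$ and extract decay of $\om(g^{(1)};2^k)$ from the Besov regularity of $g^{(1)}=f^{(n)}$: passing from $Z^{\be,q}$ to $H^{\be,q}$ control via Lemma \ref{normequiv} when $\be<1$, and directly from the integrability of $\et(g^{(1)};t)/t$ against $dt/t$ when $\be=1$. Once the small-scale borderlines are handled, combining the two ranges yields the claimed embedding with constants depending on $n,m,\al,\be$ but, by the first paragraph above, uniform in $p$ and $q$.
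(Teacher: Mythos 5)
Your proof of \eqref{intinc} is in substance identical to the paper's: the paper proves the same dyadic sandwich (its inequality \eqref{pqest}, stated for the increasing function $g(\ta)=\et(f^{(n)};\ta)\ta$ with exponent $\th=\al+1$) and then invokes $\|a\|_{\ell^p}\le\|a\|_{\ell^q}$, obtaining the same parameter-independent constant. For \eqref{regdec} the two arguments use the same ingredients --- the bound $\et(g;\ta)\le 4\|g\|_\infty$ at large scales, a mean-value/Taylor estimate on the second difference at small scales, and Lemma \ref{normequiv} to trade $Z^{\be,\cdot}$ for $H^{\be,\cdot}$ at the borderline --- but organize them differently: the paper establishes two elementary embeddings, $\cB^{n,\be,\infty}\hookrightarrow\cB^{n,\al,1}$ (same $n$, $\al<\be$) and $\cB^{n+1,\be,\infty}\hookrightarrow\cB^{n,1,1}$ for $\be<1$, and reaches the general case by chaining these with \eqref{intinc}, whereas you reduce to $m=0$ via $f\mapsto f^{(m)}$ and run a single unified dyadic estimate. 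Your version is arguably more transparent about where the hypothesis $m+\al<n+\be$ enters (it is exactly the convergence condition for the small-scale geometric tail), at the price of a case analysis on $N=n-m$.

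One step needs repair. In the borderline configuration $N=1$, $\al=1$, $\be=1$, your proposed source of decay --- ``directly from the integrability of $\et(g^{(1)};t)/t$ against $dt/t$'' --- does not by itself control $\om(g^{(1)};2^k)$: passing from the second modulus to the first at exponent $1$ is precisely where Lemma \ref{normequiv} fails (the constant $\frac{1}{1-2^{\al-1}}$ blows up, and the Weierstra{\ss} example shows the implication is genuinely false without a logarithmic loss). One can salvage your route with a Marchaud-type estimate $\om(g^{(1)};t)\lesssim t(1+\log(1/t))\,\|g^{(1)}\|^\cZ_{0,1,\infty}$, whose logarithmic loss is still summable against $2^{kp}$ over $k<0$, but that inequality is not proved in your sketch. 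The cleaner fix, consistent with both your architecture and the paper's, is to note that the hypothesis $1<1+\be$ is strict, so one may first apply the already-established same-$N$ case to lower $\be$ to $\be/2<1$ and then use Lemma \ref{normequiv} as in your $\be<1$ branch. With that patch the argument is complete.
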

	
	\begin{remark}
		\label{hölderrem}
		For $m = n$, \eqref{regdec} is also valid for $\cB_\cH$, i.e. $\cB_\cH^{n,\be,q}(E,F) \hookrightarrow \cB^{n,\al,p}_\cH(E,F)$. \eqref{intinc} holds for $\cB_\cH$ without any restrictions.
	\end{remark}
	
	\begin{proof}[Proof of Proposition \ref{embedding}]
		Let us first prove \eqref{intinc}. To this end we show the following intermediate result, inspired by Exercise 14.21 from \cite{Leoni09}: Given an increasing function $g:[0,\infty) \rightarrow [0,\infty)$, $1\le \th \le 2$, and $1\le q < p < \infty$, there exists a constant $C$ independent from $g,p,q,\th$ such that
		\begin{equation}
		\label{pqest}
		\bigg(\int_0^\infty \bigg(\frac{g(\ta)}{\ta^\th}\bigg)^p\frac{d\ta}{\ta}\bigg)^{1/p} \le C \bigg(\int_0^\infty \bigg(\frac{g(\ta)}{\ta^\th}\bigg)^q\frac{d\ta}{\ta}\bigg)^{1/q},
		\end{equation}
		and also for $p = \infty$, when the left hand side in \eqref{pqest} is interpreted as $\sup_{0 < \ta } \frac{g(\ta)}{\ta^\th}$. Let us show \eqref{pqest} for $p < \infty$ first. To this end, first note that $l^q \hookrightarrow l^p$, and it even holds $\|a\|_{l^p}\le \|a\|_{l^q}$ for any $a \in \R^\Z$. We can estimate
		\begin{align*}
			\begin{split}
				\frac{1}{2^{1/p}} \|(g(2^k)2^{-\th(k+1)})_k\|_{l^p} &=\frac{1}{2^{1/p}} \bigg(\sum_{k=-\infty}^{\infty} (g(2^k) 2^{-\th(k+1)})^p\bigg)^{1/p}\\
				& \le \bigg(\int_0^\infty \bigg(\frac{g(\ta)}{\ta^\th}\bigg)^p\frac{d\ta}{\ta}\bigg)^{1/p}\\
				&\le \bigg(\sum_{k=-\infty}^{\infty} (g(2^{k+1}) 2^{-\th k})^p\bigg)^{1/p}\\
				&= \|(g(2^{k+1})2^{-\th k})_k\|_{l^p}\\
				&= 2^{2\th}\|(g(2^k)2^{-\th(k+1)})_k\|_{l^p}
			\end{split}
		\end{align*}
		
		where we just used that $g$ is increasing. Clearly the same inequality holds for $p$ replaced by $q$. Thus we get
		\begin{align*}
			\bigg(\int_0^\infty \bigg(\frac{g(\ta)}{\ta^\th}\bigg)^p\frac{d\ta}{\ta}\bigg)^{1/p} &\le 2^{2\th}\|(g(2^k)2^{-\th(k+1)})_k\|_{l^p}\\
			&\le 2^{2\th}\|(g(2^k)2^{-\th(k+1)})_k\|_{l^q}\\
			&\le \underbrace{2^{2\th + 1/p}}_{\le 2^{2\th+1}} \bigg(\int_0^\infty \bigg(\frac{g(\ta)}{\ta^\th}\bigg)^q\frac{d\ta}{\ta}\bigg)^{1/q}.
		\end{align*}
		Finally, assume $p = \infty$, and let $\ta_0 \in (0,\infty)$. Then there exist unique $k_0 \in \Z$ and $t_0 \in [1,2^{1/k_0})$ such that $\ta_0 = (t_0 2)^{k_0}$ and we can estimate
		\begin{align*}
			\frac{g(\ta_0)}{\ta_0^\th} &= \frac{g((t_0 2)^{k_0})}{(t_0 2)^{\th k_0}} \le \bigg(\sum_{k=-\infty}^{\infty} (g((t_0 2)^k) (t_0 2)^{-\th k})^q\bigg)^{1/q}\\
			&= (t_0 2)^\th \bigg(\sum_{k=-\infty}^{\infty} (g((t_0 2)^k) (t_0 2)^{-\th (k+1)})^q\bigg)^{1/q}\\
			&\le 2^{2\th+1} \bigg(\int_0^\infty \bigg(\frac{g(\ta)}{\ta^\th}\bigg)^q\frac{d\ta}{\ta}\bigg)^{1/q}.
		\end{align*}
		Since the constant is independent from $\ta_0$, this finishes the proof of \eqref{pqest} also for $p = \infty$, and shows that $2^5$ is a suitable choice for $C$.\par 
		Inequality \eqref{pqest} now immediately gives a proof for \eqref{intinc}: Let $f \in \cB^{n,\al,q}(E,F)$, and set $g(\ta):= \et(f^{(n)};\ta)\ta$, which is increasing, and $\th := \al + 1$. Then \eqref{pqest} yields $Z^{\al,p}(f^{(n)}) \le C Z^{\al,q}(f^{(n)})$, which gives \eqref{intinc}. By choosing $g(\ta):= \om(f^{(n)};\ta)\ta$, we also get a proof for the respective statement in Remark \ref{hölderrem}.\par 

		Now to the proof of \eqref{regdec}. Assume first that $n = m$ (which implies that $\al < \be$). Due to \eqref{intinc} it is enough to show that $\cB^{n,\be,\infty}(E,F) \hookrightarrow \cB^{n,\al,1}(E,F)$. So let $f \in \cB^{n,\be,\infty}(E,F)$. Then we estimate
		\begin{align*}
			\int_0^1 \frac{\et(f^{(n)};\ta)}{\ta^\al} \frac{d\ta}{\ta} &= \int_0^1 \frac{\et(f^{(n)};\ta)}{\ta^\be} \frac{d\ta}{\ta^{1-(\be-\al)}}\\
			&\le \sup_{\ta > 0} \frac{\et(f^{(n)};\ta)}{\ta^\be} \int_0^1\frac{1}{\ta^{1-(\be-\al)}} \, d\ta\\
			&= \frac{1}{\be-\al} Z^{\be,\infty}(f^{(n)}),
		\end{align*}
		and since $\int_1^\infty \frac{\et(f^{(n)};\ta)}{\ta^\al} \frac{d\ta}{\ta} \le \frac{4\|f^{(n)}\|_\infty}{\al}$, we can easily conclude that 
		\[
		\|f\|_{n,\al,1}^\cZ \le 4\frac{\be}{\al(\be-\al)} \|f\|_{n,\be,\infty}^\cZ,
		\]
		and by \eqref{intinc} the existence of an absolute constant $B$ such that
		\[
		\|f\|_{n,\al,p}^\cZ \le B \frac{\be}{\al(\be-\al)} \|f\|_{n,\be,q}^\cZ.
		\]
		Again by replacing $\et$ with $\om$, we get a proof for the respective statement in Remark \ref{hölderrem}.\par 
		Next we show $\cB^{n+1,\be,\infty} \hookrightarrow \cB^{n,1,1}$ for any $\be \in (0,1)$, which together with the preceding proof for $n = m$ and \eqref{intinc} then yields  \eqref{regdec}. Let $f \in \cB^{n+1,\be,\infty}$. Again we estimate (using the mean value theorem)
		\begin{align*}
			\int_0^1& \frac{\et(f^{(n)};\ta)}{\ta} \frac{d\ta}{\ta}\\
			 & \le \int_0^1 \frac{2\sup_{x\in E, |h|\le \ta} \|f^{(n)}(x+h)-f^{(n)}(x) - f^{(n+1)}(x)(h)\|}{\ta} \frac{d \ta}{\ta}\\
			 & \le \int_0^1 \frac{2\sup_{x\in E,  |\xi|\le \ta,} \|f^{(n+1)}(x+\xi) - f^{(n+1)}(x)\|\ta}{\ta} \frac{d \ta}{\ta}\\
			 & \le \int_0^1 2H^{\be,\infty}(f^{(n+1)}) \ta^\be \frac{d \ta}{\ta} = \frac{2}{\be} H^{\be,\infty}(f^{(n+1)}),\\
			 & \le \frac{2}{\be (1-2^{\be-1})} Z^{\be,\infty}(f^{(n+1)}) \le  \frac{2}{\be (1-2^{\be-1})} \|f\|^\cZ_{n+1,\be,\infty},
		\end{align*}
		where we used Lemma \ref{normequiv} to get the second to last inequality. Since  $\int_1^\infty \frac{\et(f^{(n)};\ta)}{\ta} \frac{d\ta}{\ta} \le 4\|f^{(n)}\|_\infty$, we get
		\[
		Z^{1,1}(f^{(n)}) \le \frac{6}{\be (1-2^{\be-1})} \|f\|^\cZ_{n+1,\be,\infty}.
		\] 
	\end{proof}

	\begin{remark}
		\label{rem:ex}
		But observe that $\cB^{n,1,p}_\cH$ (for $p < \infty$) does not quite fit in this picture. To this end, we construct a function $f \in \cB^{1,\al,\infty}_\cH \backslash \bigcup_{1 \le p < \infty}\cB^{0,1,p}_\cH$ . Let $f \in C^\infty(\R)$ with
		\[f(x) =
		\begin{cases}
		1&\text{ for } x>1,\\
		x&\text{ for } -1/2<x<1/2,\\
		-1&\text{ for } ~x< -1.
		\end{cases}
		\]
		Then clearly $f \in C^2_b(\R)$ and thus also in $\cB^{1,\al,\infty}_\cH$, but 
		\begin{align*}
			\int_0^1 \bigg(\frac{\om(f;\ta)}{\ta}\bigg)^p \frac{d\ta}{\ta} \ge \int_0^1 \frac{d\ta}{\ta} = \infty.
		\end{align*}
	\end{remark}
	
	\subsection{Summary}
	
	Let us visualize the inclusion relations in a diagram: Let $1\le p \le q \le \infty$, $0<\al < \be < 1$, $n \in \N$, then
	\[
	\xymatrix{
		\cB^{n+1,\al,q}  \ar@{^{(}->}[d] \\
		\cB^{n,1,p} \ar@{^{(}->}[r] & \cB^{n,1,q} \ar@{^{(}->}[r] & \cB^{n,1,\infty} \ar@{^{(}->}[r] & \cB^{n,\be,p} \ar@{^{(}->}[r] & \cB^{n,\be,q} \ar@{^{(}->}[r] & \cB^{n,\al,p}\\
		\cB^{n,1,p}_\cH \ar@{^{(}->}[u] \ar@{^{(}->}[r] & \cB^{n,1,q}_\cH \ar@{^{(}->}[u] \ar@{^{(}->}[r] & \cB^{n,1,\infty}_\cH \ar@{^{(}->}[u] \ar@{^{(}->}[r] & \cB^{n,\be,p}_\cH \ar@{=}[u] \ar@{^{(}->}[r] & \cB^{n,\be,q}_\cH \ar@{^{(}->}[r] \ar@{=}[u] & \cB^{n,\al,p}_\cH \ar@{=}[u]\\
		&&\cB^{n+1,\al,q} \ar@{^{(}->}[u]
		}
	\]
	where domain and codomain are some Banach spaces $E,F$. 
	The only inclusion not yet proved is $\cB^{n+1,\al,q} \hookrightarrow \cB^{n,1,\infty}_\cH$. This follows immediately from $H^{1,\infty}(f^{(n)})\le \|f^{(n+1)}\|_\infty$.
	
	\section{Composition in Besov spaces} Before we investigate the regularity of the composition operator, we include a crucial result needed in the proof of ODE-closedness.
	
	\begin{proposition}
		\label{multilinearbesov}
		Let $g_1 \in \cB^{0,\al,p}(\R^d,L_{k-1}(\R^d;\R^d))$ and $g_i \in \cB^{0,\al,p}(\R^d,L_{\ga_i}(\R^d;\R^d))$ for $2\le i \le k$. Then $x \mapsto \Ps(x) := g_1(x)\cdot (g_2(x), \dots , g_k(x)) \in \cB^{0,\al,p}(\R^d,L_n(\R^d;\R^d))$ where $n = \ga_2+\cdots+\ga_k$ and there exists a constant $C$ depending only on $\al$ and $k$ such that 
		\[
		\|\Ps\|^\cZ_{0,\al,p}\le C \prod_{i=1}^k\|g_i\|_{0,\al,p}^\cZ.
		\]
	\end{proposition}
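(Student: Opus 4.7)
The plan is to prove the claim in three steps, separately bounding the sup-norm part and the seminorm $Z^{\al,p}(\Ps)$. The sup bound is immediate: since $\Ps(x)$ is the value at $x$ of a bounded multilinear composition applied to $g_1(x),\dots,g_k(x)$, we get $\|\Ps\|_\infty \le \prod_{i=1}^k \|g_i\|_\infty \le \prod_i \|g_i\|_{0,\al,p}^\cZ$. So the real content is in controlling $Z^{\al,p}(\Ps)$.

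First I would derive a second-difference Leibniz formula for $\Ps$. One verifies directly that for two scalar functions
\[
\De_h^2(fg)(x) = \De_h^2 f(x)\,g(x) + 2\,\De_h^1 f(x+h)\cdot \De_h^1 g(x) + f(x+2h)\cdot \De_h^2 g(x),
\]
and the $k$-fold analogue follows either by induction, or by expanding $g_i(x+2h)=g_i(x)+2\De_h^1 g_i(x)+\De_h^2 g_i(x)$ and $g_i(x+h)=g_i(x)+\De_h^1 g_i(x)$ inside $\De_h^2\Ps(x)=P(g_1(x+2h),\dots)-2P(g_1(x+h),\dots)+P(g_1(x),\dots)$ (where $P$ denotes the multilinear composition) and checking which terms survive cancellation. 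The outcome is that $\De_h^2\Ps(x)$ is a finite sum of multilinear products in which each factor is a translate $g_i(x+jh)$, a first difference $\De_h^1 g_i$ (at some shift), or a second difference $\De_h^2 g_i$, and in every surviving term the total \emph{difference weight} ($2$ for each $\De^2$-factor, $1$ for each $\De^1$-factor) is at least $2$. Estimating each factor by its sup or by its modulus yields, for all $\|h\|\le t$,
\[
\|\De_h^2\Ps(x)\|\le C_k\sum_i \et(g_i;t)\!\!\prod_{j\neq i}\!\|g_j\|_\infty + C_k\!\!\sum_{i_1<i_2}\!\om(g_{i_1};t)\,\om(g_{i_2};t)\!\!\prod_{j\neq i_1,i_2}\!\|g_j\|_\infty.
\]

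Next I would take the supremum over $x$ and $\|h\|\le t$ and evaluate the weighted $L^p$-norm in $t$. The Case A contribution (one $\et$-factor) integrates immediately to $\sum_i Z^{\al,p}(g_i)\prod_{j\neq i}\|g_j\|_\infty\le k\prod_i \|g_i\|_{0,\al,p}^\cZ$. The hard part is the Case B integral
\[
\bigg(\int_0^\infty \bigg(\frac{\om(g_{i_1};t)\,\om(g_{i_2};t)}{t^\al}\bigg)^p\frac{dt}{t}\bigg)^{1/p},
\]
especially when $\al=1$ and $p<\infty$, because there $\cB^{0,1,p}\supsetneq \cB^{0,1,p}_\cH$, so $\om(g;t)/t$ need not lie in $L^p(dt/t)$ and the naive estimate via $H^{\al,p}$ fails. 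My workaround is to trade a little smoothness: by Proposition \ref{embedding} combined with Lemma \ref{normequiv}, $\cB^{0,\al,p}\hookrightarrow \cC^{0,\al'}$ for any $\al'\in(0,\al)$, with an embedding constant depending on $\al,\al',p$. Fixing some $\al'\in(\al/2,\al)$ gives the pointwise bound $\om(g_{i_l};t)\le C\,t^{\al'}\|g_{i_l}\|_{0,\al,p}^\cZ$; combined with the trivial $\om(g_{i_l};t)\le 2\|g_{i_l}\|_\infty$ for large $t$, the Case B integrand is controlled near $0$ by the positive power $t^{2\al'-\al}$ and near $\infty$ by $t^{-\al}$, so the integral is finite and bounded by $\|g_{i_1}\|_{0,\al,p}^\cZ\|g_{i_2}\|_{0,\al,p}^\cZ\prod_{j\neq i_1,i_2}\|g_j\|_\infty$ times a constant depending on $\al,p$. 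Adding the Case A and Case B bounds finishes the proof with a constant $C$ depending on $k,\al,p$.
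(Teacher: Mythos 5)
Your proposal is correct and follows essentially the same route as the paper: the same second-difference Leibniz decomposition into terms carrying one second difference or two first differences (the paper's Lemma \ref{multilinearexp}), the same immediate treatment of the $\et$-terms via $Z^{\al,p}$, and the same underlying idea of splitting the regularity $\al$ between the two factors of the cross terms to dodge the failure of $H^{\al,p}$ at $\al=1$. The only (harmless) variation is in the last integral: the paper bounds it by $H^{\al/2,p}(g_i)\,H^{\al/2,\infty}(g_j)$ and then invokes Lemma \ref{normequiv} and Proposition \ref{embedding}, whereas you substitute the pointwise bound $\om(g;t)\lesssim t^{\al'}$ for some $\al'\in(\al/2,\al)$ and integrate directly — both rest on the same two auxiliary results.
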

	
	The proof of Proposition \ref{multilinearbesov} is based on the following easy observation.
	
	\begin{lemma}
		\label{multilinearexp}
		Let $T \in L_k(E_1, \dots , E_k;F)$ and $a=(a_i)_{i=1}^k,b=(b_i)_{i=1}^k,c=(c_i)_{i=1}^k \in \prod_{i=1}^{k}E_i$. Then $T(a)-2T(b)+T(c)$ can be written as a finite sum of elements of the form $T(\dots, a_i-2b_i+c_i,\dots)$, for some $1\le i \le k$, and $T(\dots, b_i-c_i,\dots,a_j-c_j,\dots)$, for some $1\le i < j \le k$. The dots represent entries of the form $a_l,~b_l$, or $c_l$.
	\end{lemma}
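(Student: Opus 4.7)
The plan is to apply the standard telescoping identity for multilinear maps twice, each time taking care of one of the two "types" of terms that appear in the claim. Concretely, I would first write
\[
T(a) - 2T(b) + T(c) = [T(a) - T(b)] + [T(c) - T(b)],
\]
and for each bracketed expression invoke the identity
\[
T(a) - T(b) = \sum_{i=1}^k T(b_1, \ldots, b_{i-1}, a_i - b_i, a_{i+1}, \ldots, a_k),
\]
which follows from multilinearity by a one-variable-at-a-time substitution. In the first of the two sums I would then split $a_i - b_i = (a_i - 2b_i + c_i) + (b_i - c_i)$, which immediately produces terms of the first advertised form, namely $T(\ldots, a_i - 2b_i + c_i, \ldots)$ with only $b_l$'s and $a_l$'s in the other slots. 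What remains after this split is $-T(\ldots, c_i - b_i, a_{i+1},\ldots, a_k)$ summed over $i$, plus the original telescoping of $T(c) - T(b)$.

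For the remainder I would pair, for each $i$, the two terms with $\pm (c_i - b_i)$ in slot $i$: their sum equals
\[
T(b_1,\ldots, b_{i-1}, c_i - b_i, c_{i+1}, \ldots, c_k) - T(b_1,\ldots, b_{i-1}, c_i - b_i, a_{i+1}, \ldots, a_k),
\]
and a second application of the same telescoping identity, now with respect to the slots $j > i$, rewrites this as $\sum_{j>i} T(b_1,\ldots,b_{i-1}, c_i - b_i, a_{i+1}, \ldots, a_{j-1}, c_j - a_j, c_{j+1},\ldots, c_k)$. Flipping signs in the two difference slots via $c_i - b_i = -(b_i - c_i)$ and $c_j - a_j = -(a_j - c_j)$ produces exactly terms of the second advertised form, $T(\ldots, b_i - c_i, \ldots, a_j - c_j, \ldots)$ with $i < j$, and the remaining entries are $b_l$, $c_l$ or $a_l$ as required.

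There is no real obstacle: the argument is a purely algebraic bookkeeping of telescoping sums, and the only thing to verify at the end is that the dots in the resulting terms consist exclusively of entries $a_l$, $b_l$, $c_l$ — which is guaranteed by the construction (the outer telescoping substitutes $b$'s for $a$'s only in slots $l < i$, while the inner telescoping substitutes $a$'s and $c$'s in slots between $i$ and $j$).
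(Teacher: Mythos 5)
Your argument is correct: the double telescoping, combined with the split $a_i-b_i=(a_i-2b_i+c_i)+(b_i-c_i)$ and the sign flips at the end, does produce only terms of the two advertised forms, with the remaining slots filled by $b_l$ ($l<i$), $a_l$ ($i<l<j$) and $c_l$ ($l>j$), all of which are permitted. The paper proceeds differently in organization: it first proves the one-variable identity
\[
f(a)-2g(b)+h(c)=(f-2g+h)(a)+g(a-2b+c)+(g-h)(a-c)
\]
for linear maps, then inducts on $k$ by viewing $T$ as a linear map in its last slot, handling the first summand by the inductive hypothesis and the third by telescoping $T(b_1,\dots,b_{k-1})-T(c_1,\dots,c_{k-1})$. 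Unrolling that induction yields a decomposition whose type-(2) terms have the difference $a_j-c_j$ always in the \emph{later} slot being peeled off, with $c$'s between the two difference slots and $a$'s after; your closed-form expansion instead places $a$'s between the difference slots and $c$'s after. Both decompositions are valid and equally serviceable for the application in Proposition \ref{multilinearbesov} (only the \emph{form} of the summands and their number, depending only on $k$, matter there). What your version buys is an explicit, induction-free formula for all summands; what the paper's version buys is that the combinatorics is hidden in a two-line identity plus a one-step recursion. Either is acceptable.
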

	
	\begin{proof}
		It is easily seen that for $f,g,h \in L(E;F)$, and $a,b,c \in E$ 
		\begin{equation}
		\label{indstart}
		f(a)-2g(b)+h(c)= (f-2g+h)(a) + g(a-2b+c)+(g-h)(a-c).
		\end{equation}
		Using \eqref{indstart}, we prove the claim by induction on $k$. For $k = 1$, the result is trivial due to linearity of $T$. So now let $k>1$. First we observe that for fixed $x_i \in E_i$, the mapping $x \mapsto T(x_1,\dots,x_{k-1},x)=:T(x_1,\dots,x_{k-1})(x)$ is in $L(E_k;F)$. So by applying \eqref{indstart}, we get that
		\begin{align*}
			&T(a)-2T(b)+T(c) \\
			&= (T(a_1,\dots,a_{k-1}) - 2T(b_1,\dots,b_{k-1})+T(c_1,\dots,c_{k-1}))(a_k)\\
			&~+T(b_1,\dots,b_{k-1},a_k-2b_k+c_k)\\
			&~+(T(b_1,\dots,b_{k-1})-T(c_1,\dots,c_{k-1}))(a_k-c_k).
		\end{align*}
		For the first summand, we apply the inductive assumption and get terms of the desired form, and for the third summand we expand $T(b_1,\dots,b_{k-1})-T(c_1,\dots,c_{k-1})$ by inserting a telescopic sum of terms $T(b_1,\dots,b_l,c_{l+1},\dots,c_{k-1})$, which yields the desired representation.
	\end{proof}
	
	\begin{proof}[Proof of Proposition \ref{multilinearbesov}]
		Let 
		\begin{align*}
			T:&~L_{k-1}(\R^d;\R^d)\times L_{\ga_2}(\R^d;\R^d)\times \cdots \times L_{\ga_k}(\R^d;\R^d) \rightarrow L_n(\R^d;\R^d)\\
			&~(B_1,B_2,\dots,B_k) \mapsto B_1\cdot(B_2, \cdots, B_k).
		\end{align*}
		Then $T$ is $k$-linear and $\Ps(x)=T(g_1(x), \dots, g_k(x))$, and we get by Lemma \ref{multilinearexp}
		\[
		\De^2_h\Ps(x)= \sum S(x,h),
		\]
		where $S(x,h)$ is of one of the following forms
		\begin{gather}
		\label{f1}
			T(\dots,\De^2_hg_i(x),\dots),\\
		\label{f2}
			T(\dots,g_i(y)-g_i(z), \dots, g_j(y)-g_j(z), \dots),
		\end{gather}
		here $y,z \in \{x,x+h,x+2h\}, y \neq z$, the dots are entries of the form $g_l(y)$, and $y,z$ may differ from entry to entry. Thus we can estimate
		\begin{align}
		\label{multilinearest}
			Z^{\al,p}(\Ps) \le \sum \bigg(\int_0^\infty \bigg(\frac{\sup_{x \in \R^d,\|h\|\le t}\|S(x,h)\|}{t^\al}\bigg)^p\frac{dt}{t}\bigg)^{1/p}.
		\end{align}
		For $S(x,h)=~$\eqref{f1} we estimate by 
		\begin{align*}
		Z^{\al,p}(g_i)\cdot \prod_{1\le l \le k, l\neq i} \|g_l\|_\infty \le \prod_{l=1}^k \|g_l\|^\cZ_{0,\al,p},
		\end{align*}
		and for $S(x,h)=~$\eqref{f2} by 
		\begin{align*}
			&~\bigg(\prod_{1\le l \le k, l\neq i,j}\|g_l\|_\infty \bigg)4\bigg(\int_0^\infty \bigg( \frac{\om(g_i;t) \om(g_j;t)}{t^\al}\bigg)^p \frac{dt}{t}\bigg)^{1/p}\\
			\le & ~4\bigg(\prod_{1\le l \le k, l\neq i,j}\|g_l\|_\infty \bigg) H^{\al/2,p}(g_i) H^{\al/2,\infty}(g_j)\\
			\le & ~4\bigg(\prod_{1\le l \le k, l\neq i,j}\|g_l\|_\infty \bigg)B \|g_i\|^\cZ_{0,\al,p} \|g_j\|^\cZ_{0,\al,p}\\
			\le & ~ 4 B\prod_{l=1}^k \|g_l\|^\cZ_{0,\al,p},
		\end{align*}
		 for a constant $B$ only depending on $\al$ due to Lemma \ref{normequiv} and Proposition \ref{embedding}. Since the number of summands in \eqref{multilinearest} only depends on $k$ we thus get a constant $C$ such that $Z^{\al,p}(\Ps) \le C \prod_{l=1}^k \|g_l\|^\cZ_{0,\al,p}$. Since we clearly also have
		 \[
		 \|\Ps\|_\infty \le \prod_{l=1}^k \|g_l\|_\infty,
		 \]
		the proof is finished.		
	\end{proof}
	
	The following is a variation of Lemma 2 from \cite{BourdaudCristoforis02} but for general (co)domains. The proofs are analogous.
	
	\begin{proposition}
		\label{comp}
		Let $0<\al,\be \le 1$, $p \in [1,\infty]$, $n \in \N_{\ge 1}$, $f \in \cB^{0,\al,p}(E,F)$, and $g\in \cB^{n,\be,p}(E,E)$. Then $f\circ g$ and $f \circ (\Id+g)$ are in $\cB^{0,\al,p}(E,F)$ and there exists a continuous increasing function $\ps: \R_+ \rightarrow \R_+$ independent of $f,g$ such that
		\begin{gather*}
		\|f \circ g\|^\cZ_{0,\al,p} \le \|f\|^\cZ_{0,\al,p}\ps(\|g\|^\cZ_{n,\be,p}),\\
		\|f \circ (\Id+g)\|^\cZ_{0,\al,p} \le \|f\|^\cZ_{0,\al,p}\ps(\|g\|^\cZ_{n,\be,p}).
		\end{gather*}
	\end{proposition}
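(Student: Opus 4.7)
The plan is to exploit the pointwise identity
\[
\De^2_h(f\circ g)(x) = \De^2_{\de_h(x)} f(g(x)) + \bigl[f(g(x+2h)) - f(g(x+2h) - \De^2_h g(x))\bigr],
\]
where $\de_h(x):=g(x+h)-g(x)$, obtained by adding and subtracting $f(g(x)+2\de_h(x))$ and using $g(x)+2\de_h(x)=g(x+2h)-\De^2_h g(x)$. The first summand is bounded pointwise by $\et(f;\|g'\|_\infty\|h\|)$ (since $n\ge 1$ ensures $g\in C^1$), while the second is bounded by $\om(f;\|\De^2_h g(x)\|)$.

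Next I would use Proposition \ref{embedding} together with Lemma \ref{normequiv} to embed $\cB^{n,\be,p}\hookrightarrow \cB^{1,\ga,\infty}_\cH$ for a suitable $\ga\in(0,1)$ (e.g.\ $\ga=\be/2$ if $n=1$ and $\be<1$, and $\ga=1/2$ otherwise). Hence $g'$ is $\ga$-H\"older with constant $K_g$ depending continuously on $\|g\|^\cZ_{n,\be,p}$, and the mean-value representation $\De^2_h g(x)=\int_0^1[g'(x+h+sh)-g'(x+sh)]h\,ds$ gives $\|\De^2_h g(x)\|\le K_g\|h\|^{1+\ga}$. Setting $C_g:=\|g'\|_\infty\le\|g\|^\cZ_{n,\be,p}$, this yields
\[
\et(f\circ g;t)\le \et(f;C_g t)+\om(f;K_g t^{1+\ga}).
\]

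Multiplying by $t^{-\al}$ and taking the $L^p(dt/t)$-norm, Minkowski's inequality separates the two contributions. The first, after $s=C_g t$, equals $C_g^\al Z^{\al,p}(f)$. For the second I split at $t=1$: on $t\ge 1$ the trivial bound $\om(f;\cdot)\le 2\|f\|_\infty$ produces a harmless multiple of $\|f\|_\infty$; on $t\le 1$ the substitution $s=K_g t^{1+\ga}$ converts the integral, up to constants depending only on $\al,\ga,p,K_g$, into one controlled by $H^{\al/(1+\ga),p}(f)$. The key observation is that $\al/(1+\ga)<1$, so Lemma \ref{normequiv} applies and gives $H^{\al/(1+\ga),p}(f)\le C\,Z^{\al/(1+\ga),p}(f)$, which is in turn dominated by $\|f\|^\cZ_{0,\al,p}$ after splitting the $Z$-integral at $t=1$. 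Combining with the trivial $\|f\circ g\|_\infty\le\|f\|_\infty$ yields the desired $\|f\circ g\|^\cZ_{0,\al,p}\le\|f\|^\cZ_{0,\al,p}\,\ps(\|g\|^\cZ_{n,\be,p})$.

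For $f\circ(\Id+g)$ the argument is essentially identical: $(\Id+g)'=\Id+g'$ only enlarges the sup-norm of the derivative by $1$, and $\De^2_h(\Id+g)=\De^2_h g$, so the same bound on $\|\De^2_h(\Id+g)(x)\|$ applies. I expect the main obstacle to be the endpoint $\al=1$, for which Lemma \ref{normequiv} does not permit estimating $H^{1,p}$ by $Z^{1,p}$, so a naive $\om(f;\cdot)$-estimate fails. The remedy is exactly the super-linear decay $\|h\|^{1+\ga}$ of $\|\De^2_h g(x)\|$, bought by embedding $g'$ into a H\"older class, which drives the effective smoothness exponent of $f$ strictly below $1$ and restores Lemma \ref{normequiv}.
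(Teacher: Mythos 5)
Your argument is correct and is essentially the standard second-difference decomposition underlying Lemma~2 of \cite{BourdaudCristoforis02}, to which the paper defers without reproducing a proof: the identity $\De^2_h(f\circ g)(x)=\De^2_{\de_h(x)}f(g(x))+[f(g(x+2h))-f(g(x+2h)-\De^2_hg(x))]$ together with the superlinear bound $\|\De^2_hg(x)\|\le K_g\|h\|^{1+\ga}$, obtained from the embedding $\cB^{n,\be,p}\hookrightarrow\cB^{1,\ga,\infty}\cong\cB^{1,\ga,\infty}_\cH$, is exactly what makes the endpoint $\al=1$ work, and your subsequent change of variables and use of Lemma~\ref{normequiv} and Proposition~\ref{embedding} go through for all $p\in[1,\infty]$. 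The only points worth making explicit are the trivial degenerate case $\|g'\|_\infty=0$ in the substitution $s=C_gt$, and that your choice of $\ga$ depends only on $n$ and $\be$, so the resulting $\ps$ is indeed independent of $f$ and $g$.
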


	\section{Pointwise time-dependent Besov vector fields and their flows}

	\label{odeclosed} 
	
	Let $I = [0,1]$ and let $E$ be a Banach space of mappings from $\R^d$ to $\R^d$, which continuously embeds in $C^1_b(\R^d,\R^d)$.
	
	\begin{definition}
		We say that a mapping $u : I \times \R^d \to \R^d$ is a \emph{pointwise time-dependent $E$-vector field} if 
		the following conditions are satisfied:
		\begin{itemize}
			\item $u(t, \cdot) \in E$ for every $t \in I$.
			\item $u(\cdot,x)$ is measurable for every $x\in \R^d$.
			\item $I \ni t \to \|u(t,\cdot)\|_{E}$ is (Lebesgue) integrable. 
		\end{itemize}
		Let us denote the set of all pointwise time-dependent $E$-vector fields by $\fX_E(I,\R^d)$. 
		We remark that instead of the third condition we could also require that $t \mapsto \|u(t,\cdot)\|_E$ is dominated a.e.\ by some 
		non-negative function 
		$m \in L^1(I)$.
	
	We call $\Ph:I\times \R^d\rightarrow \R^d$ the \emph{flow} of $u \in \fX_E$ if it fulfills
	\begin{equation}
		\label{flow}
		\Ph(t,x) = x+\int_0^t u(s,\Ph(s,x))\,ds
	\end{equation}
	for all $t \in I,x \in \R^d$. To emphasize the dependence on $u$, we write $\Ph_u(t,x)$. In addition, we set $\ph_u(t,x):=\Ph_u(t,x)-x$.\par
	Finally, a Banach space $E$ is called \emph{ODE-closed} iff $\ph_u(t,\cdot) \in E$ for all $u \in \fX_E$ and all $t \in I$.
	\end{definition}
	
	Since $\cB^{n,\al,p}(\R^d,\R^d)$ continuously embeds in $C^1_b(\R^d,\R^d)$ for $n \ge 1$, we have in particular all notions of the previous definition for those Besov spaces.
	
	\begin{convention*}
		From now on, if we do not mention domain and codomain, we implicitly assume them to be $\R^d$.
		For pointwise time-dependent Besov vector fields, we write $\fX_{n,\al,p}$ instead of $\fX_{\cB^{n,\al,p}}$. For a function $f$ of two variables, such as $u$ or $\Ph_u$, we will also write $f(t)$ for $f(t,\cdot)$.
	\end{convention*}
	
	One of the main ingredients in showing ODE-closedness with pointwise techniques consists of Gronwall's inequality. We will only use the following weak version.
	
	\begin{lemma}
		\label{gronwall}
		Let $t_0 \in J\subseteq \R$ be an interval, $\al,~ u$ positive functions defined on $J$, and $c$ some positive constant. Assume that $u$ is bounded on $J$, and
		\[
		u(t) \le c + \int_{t_0}^t \al(s)u(s)\,ds.
		\]
		Then it follows
		\[
		u(t) \le c e^{\int_{t_0}^t \al(s)\,ds}.
		\]
	\end{lemma}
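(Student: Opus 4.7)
The plan is to use the standard trick of transferring the integral inequality to a differential inequality for an auxiliary function, and then to exploit monotonicity of a suitable exponential-weighted version. Concretely, I would define
\[
v(t) := c + \int_{t_0}^t \al(s) u(s)\,ds, \qquad A(t) := \int_{t_0}^t \al(s)\,ds,
\]
so that by hypothesis $u(t) \le v(t)$ on $J$. Since $u$ is assumed bounded and $\al$ is integrable (so that the right-hand side makes sense), the product $\al u$ is integrable, which means $v$ is absolutely continuous with $v'(t) = \al(t) u(t)$ for a.e.\ $t$. Combining this with $u \le v$ gives the differential inequality $v'(t) \le \al(t) v(t)$ a.e.

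The heart of the argument is then to consider $w(t) := v(t) e^{-A(t)}$. Since $A$ is absolutely continuous with $A' = \al$ a.e., so is $w$, and a direct computation yields
\[
w'(t) = \bigl(v'(t) - \al(t) v(t)\bigr) e^{-A(t)} \le 0 \quad \text{for a.e.\ } t.
\]
An absolutely continuous function with non-positive derivative almost everywhere is non-increasing (on $J$ to the right of $t_0$; for $t<t_0$ the same reasoning with reversed orientation applies), hence $w(t) \le w(t_0) = c$. Re-arranging gives $v(t) \le c\, e^{A(t)}$, and since $u(t) \le v(t)$ the desired bound $u(t) \le c\, e^{\int_{t_0}^t \al(s)\,ds}$ follows.

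The only delicate point is the passage from the almost-everywhere differential inequality to the monotonicity of $w$, which relies on absolute continuity rather than classical $C^1$-regularity; this is the reason integrability of $\al$ and boundedness of $u$ enter, ensuring $v$ and hence $w$ are absolutely continuous. An alternative route, if one wishes to avoid absolute continuity entirely, would be to iterate the integral inequality to obtain
\[
u(t) \le c \sum_{k=0}^{N} \frac{A(t)^k}{k!} + \|u\|_\infty \frac{A(t)^{N+1}}{(N+1)!},
\]
and let $N \to \infty$; here the boundedness assumption on $u$ is used to kill the remainder term. Either route is essentially routine, and I expect no serious obstacle beyond the verification that $w$ is indeed monotone under the stated integrability hypothesis.
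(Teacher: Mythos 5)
Your argument is correct: it is the standard integrating-factor proof of Gronwall's inequality (set $v(t)=c+\int_{t_0}^t\al(s)u(s)\,ds$, observe $v'=\al u\le\al v$ a.e., and show $t\mapsto v(t)e^{-A(t)}$ is non-increasing by absolute continuity), and the iteration alternative you sketch is equally valid and is precisely where the boundedness hypothesis on $u$ is used. There is nothing to compare against, since the paper states this lemma without proof as a classical fact; it only ever applies it with $t_0=0$ and $t\ge t_0$, so your parenthetical about $t<t_0$ (where the stated conclusion would actually be problematic, as the exponential factor drops below $1$) is immaterial for the paper's purposes. The one hypothesis you use implicitly is measurability of $u$ (needed for $\al u$ to be integrable), but that is already required for the integral in the hypothesis to make sense.
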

	
	Before we tackle ODE-closedness of Besov spaces, let us investigate the situation in $C^n_b$. Corresponding results for $C^n_0$, the subspace of functions vanishing together with their first $n$ derivatives at $\infty$, can be found in \cite[Thm. 8.7]{Younes10} for $n = 1$ and \cite[Thm. 8.9]{Younes10} for $n \ge 2$. A proof for $C^n_b$ can be given by making some minor adjustments. Hence we only indicate where the proofs have to be altered.

	\begin{proposition}
		\label{nodeclosed}
		For $n \ge 1$, $C^{n}_b$ is ODE-closed. For a fixed $u \in \fX_{C^n_b}$, the mapping $t \mapsto \ph_u(t)$ belongs to $C(I,C^n_b)$.
	\end{proposition}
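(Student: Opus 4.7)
The plan is to adapt the standard proof of smooth dependence of ODE solutions on initial conditions to the Carathéodory setting, combining Picard--Lindelöf for existence and uniqueness with a Faà di Bruno plus Gronwall inductive scheme for the higher derivatives. For existence and uniqueness, since $u(s,\cdot) \in C^n_b \hookrightarrow C^1_b$, the Lipschitz constant of $u(s,\cdot)$ is bounded by $\|u(s,\cdot)\|_{C^1_b} \le \|u(s,\cdot)\|_{C^n_b}$, which is integrable on $I$. Standard Carathéodory theory therefore yields, for every $x \in \R^d$, a unique absolutely continuous solution $\Ph(\cdot,x)$ of \eqref{intode} on $I$, with $\|\ph_u(t)\|_\infty \le \int_0^t \|u(s,\cdot)\|_\infty \, ds$.

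Next, I would show by induction on $k = 1,\dots,n$ that $d_x^k \Ph(t,\cdot) \in C_b(\R^d)$ with explicit sup-norm bounds. For $k = 1$, formally differentiating \eqref{flow} under the integral gives
\[
d_x \Ph(t,x) = \on{Id} + \int_0^t Du(s,\Ph(s,x))\cdot d_x \Ph(s,x)\, ds,
\]
and existence of $d_x \Ph$ together with validity of this equation follows in the usual way by comparing the difference quotients $h\i(\Ph(t,x+hv)-\Ph(t,x))$ with the solution of the associated linear integral equation and invoking Gronwall (Lemma \ref{gronwall}). Once established, Gronwall also yields $\|d_x \Ph(t,\cdot)\|_\infty \le \exp \int_0^t \|Du(s,\cdot)\|_\infty\, ds$. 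For $k \ge 2$, Faà di Bruno \eqref{eq:FaadiBruno} applied inside the integral produces
\[
d_x^k \Ph(t,x) = \int_0^t \on{sym} \sum_{l=1}^{k} \sum_{\ga \in \Ga(l,k)} c_\ga\, u^{(l)}(s,\Ph(s,x))\bigl(d_x^{\ga_1}\Ph(s,x),\dots,d_x^{\ga_l}\Ph(s,x)\bigr) ds.
\]
The only term in which $d_x^k \Ph$ itself occurs on the right-hand side is the $l=1$, $\ga = (k)$ contribution $Du(s,\Ph)\cdot d_x^k \Ph$; every other term involves only derivatives of $\Ph$ of order strictly less than $k$, which are already controlled by the inductive hypothesis through an integrable function of $s$. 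Isolating the top-order term gives an inequality of the form $\|d_x^k \Ph(t,\cdot)\|_\infty \le C_k(t) + \int_0^t \|Du(s,\cdot)\|_\infty \|d_x^k \Ph(s,\cdot)\|_\infty\, ds$ with $C_k(t)$ finite and non-decreasing, and a final application of Gronwall closes the induction.

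Continuity of $t \mapsto \ph_u(t)$ in $C^n_b$ then follows from the integral representations above: for $t,t' \in I$ and $0 \le k \le n$,
\[
d_x^k \Ph(t',x) - d_x^k \Ph(t,x) = \int_t^{t'} d_x^k\bigl[u(s,\Ph(s,x))\bigr] ds,
\]
and, by Faà di Bruno together with the uniform bounds on $d_x^j \Ph$ for $j \le n$ just obtained, the integrand is dominated in sup-norm by a constant multiple of $\|u(s,\cdot)\|_{C^n_b}$, which is integrable. Dominated convergence yields $\|\ph_u(t') - \ph_u(t)\|_{C^n_b} \to 0$ as $t' \to t$.

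The step I expect to be the main obstacle --- and the one point where Younes' proof for $C^n_0$ really has to be invoked carefully --- is the rigorous justification of classical differentiability of $\Ph$ in $x$ to each order in the Carathéodory setting, which must be performed order by order via the difference-quotient comparison sketched above. Once this has been carried out for $k = 1,\dots,n$, replacing $C^n_0$ by $C^n_b$ throughout Younes' argument is routine, since none of the bounds above make use of decay of $u$ or its derivatives at infinity.
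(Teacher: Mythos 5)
Your proposal is correct and follows essentially the same route as the paper, which simply adapts Younes' proofs of Theorems 8.7 and 8.9 for $C^n_0$ (Carath\'eodory existence and uniqueness, difference-quotient comparison with the variational equation, Fa\`a di Bruno plus Gronwall for the higher-order derivatives). The one point the paper makes explicit that you bury in ``the usual way'' is precisely where $C^n_b$ differs from $C^n_0$: the difference-quotient comparison needs a modulus of continuity for $Du(t,\cdot)$, which for a $C^1_b$ field is only available on bounded sets, so Younes' global $\mu(t,\al)$ must be replaced by $\mu(t,\al,r)=\max\{|Du(t,x)-Du(t,y)|:x,y\in B(0,r),\ |x-y|<\al\}$ with $r$ large enough that the flow of the relevant bounded set of initial points stays in $B(0,r)$ --- this works because $\|\ph_u(t)\|_\infty$ is uniformly bounded, and it is the only genuine change needed.
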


\begin{proof}
	Let $n = 1$. Then in contrast to the proof of \cite[Thm. 8.7]{Younes10}, $Du(t)$ is in our case in general not uniformly continuous on all of $\R^d$, but certainly on bounded subsets. 
	Therefore for each $r > 0$
	\[
	\mu(t,\al,r):= \max\lbrace |Du(t,x)-Du(t,y)|: x,y \in B(0,r), ~|x-y|<\al \rbrace
	\]
	tends to $0$ as $\al \rightarrow 0$ for each fixed $t$. Then one substitutes the appearances of $\mu(t,\al)$ in \cite{Younes10} by $\mu(t,\al,r)$, where $r$ is sufficiently large. The rest of the proof remains the same.
	For $n \ge 2$ one makes similar changes. 
\end{proof}

\begin{remark}
	\label{extend}
	This shows that we can actually extend the results from \cite{NenningRainer16} concerning ODE-closedness of $C^{n,\be}_0$ and derive ODE-closedness of $\cC^{n,\be}$ ($C^{n,\be}_b$ in the notation of \cite{NenningRainer16}).
\end{remark}

Now we are in the position to prove our main theorem.
	
	\begin{namedthm*}{Theorem 1}
		\label{zygodeclosed}
		For all $n\in \N_{\ge 1},~\al \in (0,1],~ p \in [1,\infty]$, the Besov space $\cB^{n,\al,p}(\R^d,\R^d)$ is ODE-closed. For a fixed $u \in \fX_{n,\al,p}$, the mapping $t \mapsto \ph_u(t)$ belongs to $C(I,\cB^{n,\al,p}(\R^d,\R^d))$.	
	\end{namedthm*}

	\begin{proof}
	Fix $u \in \fX_{n,\al,p}$, and let $\Ph_u=\Id+\ph_u$ be the corresponding flow, i.e.
	it fulfils \eqref{flow}. Let us first collect what we already know:
		\begin{itemize}
			\item[(i)] For $\al \in (0,1)$ and $p = \infty$, using Remark \ref{extend}, the result is already proved in \cite{NenningRainer16}, since $\cZ^{n,\al} \cong \cC^{n,\al}$. So we are left to show the Theorem for $(n,\al,p) \in \N_{\ge 1}\times (0,1)\times [1,\infty) \cup \N_{\ge 1} \times \{1\}\times [1,\infty]$.
			\item[(ii)] From Proposition \ref{embedding}, we get $\fX_{n,\al,p} \subseteq \fX_{n,\frac{3\al}{4}, \infty}$ which yields due to \cite{NenningRainer16} that 
			\[
			t \mapsto \ph_u(t) \in C(I, \cB^{n,\frac{3\al}{4}, \infty})\subseteq C(I, \cB^{n,\al/2,p}) \subseteq C(I, \cB^{n,\al/2,\infty}).
			\]
			This implies that $\|\ph_u(t)\|^\cZ_{n,\frac{3\al}{4},\infty}$ and, since $\frac{3\al}{4} < 1$, $\|\ph_u(t)\|^\cH_{n,\frac{3\al}{4},\infty}$ are bounded uniformly for all $t \in I$.\par 
			In particular we find a constant $C$ such that for $t \in I$
			\begin{itemize}
				\item $\|d^k_x\Ph_u(t)\|_\infty \le C$ for $1\le k \le n$,
				\item $\|d^k_x \Ph_u(t)\|_{0,\al,p} \le C$ for $1\le k \le n-1$,
				\item $H^{\al/2,\infty}(d^k_x \Ph_u(t)),~H^{\al/2,p}(d^k_x \Ph_u(t)) \le C $ for $1\le k \le n$.
			\end{itemize}
			\item[(iii)] We may assume in addition that $C$ is chosen such that for all $k \le n$
			\[
			\|u(t)^{(k)}\circ \Ph_u(t)\|_{0,\al,p} \le \|u(t)^{(k)}\|_{0,\al,p}\ps (\|\ph_u(t)\|_{n,\al/2,p})\le C \|u(t)\|_{n,\al,p},
			\]
			where we used (ii), Proposition \ref{comp}, and the fact that $u(t)^{(k)} \in \cB^{0,\al,p}$. This implies
			\[
			H^{\al/2,p}(u(t)^{(k)}\circ \Ph_u(t)) \le C \|u(t)\|_{n,\al,p}.
			\]
		\end{itemize}

		By taking the $n$-th derivative with respect to the spatial variable in \eqref{flow}, we get that $\De^2_h d^n_x\Ph_u(t,x)$ satisfies the following ODE
		
		\begin{align*}
		\De^2_h d^n_x \Ph_u(t,x) = \int_0^t \De^2_h d^n_x (u(s)\circ \Ph_u(s,x))\,ds,
		\end{align*}
		and by applying Fa\`a di Bruno's formula to the integrand, this equals
		\begin{align*}
		\int_0^t& u(s)'(\Ph_u(s,x+h))\cdot\De^2_hd^n_x\Ph_u(s,x) + (\De^2_h u(s)'\circ \Ph_u(s,x))\cdot d^n_x\Ph_u(s,x+2h)\\
		&\quad + (u(s)'(\Ph_u(s,x+h))-u(s)'(\Ph_u(s,x)))\cdot(d^n_x\Ph_u(s,x+2h) - d^n_x\Ph_u(s,x)) \\
		&\quad +\underbrace{\on{sym}_n \sum_{k=2}^n\sum_{\ga \in \Ga(k,n)} c_\ga\De^2_h u(s)^{(k)}(\Ph_u(s,x))\cdot(\Ph_u^{(\ga_1)}(s,x),\cdots,\Ph_u^{(\ga_k)}(s,x))}_{=:R(s,x,h)}\,ds,
		\end{align*}
		where we used that $\on{sym}_n$ commutes with the second difference operator. This immediately yields
		\begin{align}
		\begin{split}
		\label{refineq}
		\et(d^n_x \Ph_u(t)&;\ta) \le \int_0^t \|u(s)'\|_\infty\et(d^n_x \Ph_u(s);\ta)+C\et(u(s)'\circ \Ph_u(s);\ta)\\
		&+ 2\om(u(s)'\circ\Ph_u(s);\ta)\om(d^n_x\Ph_u(s);\ta) + \sup_{x \in \R^d, \|h\|\le \ta}\|R(s,x,h)\|\,ds,
		\end{split}
		\end{align}
		where $\sup_{x \in \R^d,s\in I}\|d^n_x\Ph_u(s,x)\| \le C$ due to (ii). We claim that this yields 
		\begin{equation}
		\label{gronwn}
		Z^{\al,p}(d^n_x\Ph_u(t),\de) \le \int_0^t \|u(s)\|^\cZ_{n,\al,p}Z^{\al,p}(d^n_x\Ph_u(s),\de) + B \|u(s)\|^\cZ_{n,\al,p}\,ds,
		\end{equation}
		where $Z^{\al,p}(d^n_x\Ph_u(t),\de):= \big( \int_{\de}^{\infty}\big( \frac{\et(d^n_x\Ph_u(t);\ta)}{\ta^\al} \big)^p \frac{d\ta}{\ta} \big)^{1/p}$, and
		for some constant $B$ independent from $\de$. Since $Z^{\al,p}(d^n_x\Ph_u(t),\de)$ is bounded in $t$ for each fixed $\de$, we can apply Lemma \ref{gronwall} to \eqref{gronwn} and get
		\[
		Z^{\al,p}(d^n_x\Ph_u(t),\de) \le B \bigg(\int_0^t\|u(s)\|^\cZ_{n,\al,p}\,ds\bigg)e^{\int_0^1\|u(s)\|^\cZ_{n,\al,p}\,ds},
		\]
		which is independent from $\de$ and therefore also holds in the limit as $\de$ tends to $0$. This gives $\ph_u(t) \in \cB^{n,\al,p}$ for all $t$, i.e. ODE-closedness of $\cB^{n,\al,p}$; continuity in time will be argued afterwards. So we are left to show \eqref{gronwn}:\par 
		First assume $p < \infty$, we take the $p$-th power of \eqref{refineq}, divide by $\ta^{\al p + 1}$, integrate with respect to $\ta$ from $\de$ to $\infty$, take this term to the power $1/p$ and apply Minkowski's integral inequality. This procedure yields after applying the triangle inequality for the $p$-norm the following estimate
		\begin{align}
			\begin{split}
			\label{intest}
			Z^{\al,p}(d^n_x\Ph_u(t),\de) &\le \int_0^t \|u(s)\|^\cZ_{n,\al,p}Z^{\al,p}(d^n_x\Ph_u(s),\de) + CZ^{\al,p}(u(s)'\circ \Ph_u(s))\\
			&\quad + 2\bigg(\int_0^\infty \bigg(\frac{\om(u(s)'\circ\Ph_u(s);\ta)}{\ta^{\al/2}}\bigg)^p \bigg(\frac{\om(d^n_x\Ph_u(s);\ta)}{\ta^{\al/2}}\bigg)^p \frac{d\ta}{\ta} \bigg)^{1/p}\\
			&\quad + \bigg(\int_0^\infty \bigg(\frac{\sup_{x \in \R^d, \|h\|\le \ta}\|R(s,x,h)\|}{\ta^\al}\bigg)^p \frac{d\ta}{\ta}\bigg)^{1/p} \,ds.
			\end{split}
		\end{align}
		The second summand of the integrand can be estimated by $B\|u(s)\|^\cZ_{n,\al,p}$ due to (iii), where from now on $B$ is a generic constant. For the third summand, we first estimate 
		$\frac{\om(d^n_x\Ph_u(s);\ta)}{\ta^{\al/2}}$ by $B$ uniformly in $\ta$ and $s$ due to (ii), and for $H^{\al/2,p}(u(s)'\circ\Ph_u(s))\le B\|u(s)\|^\cZ_{n,\al,p}$, we apply (iii).\par 
		So we are left to investigate 
		\[
		\bigg(\int_0^\infty \bigg(\frac{\sup_{x \in \R^d, \|h\|\le \ta}\|R(s,x,h)\|}{\ta^\al}\bigg)^p \frac{d\ta}{\ta}\bigg)^{1/p},
		\]
		where we first use that $\|\on{sym} T\| \le \|T\|$ for any multilinear mapping $T$ and then apply the triangle inequality for the $p$-norm. We are thus left to find estimates of the form
		\begin{align}
		\begin{split}
		\label{intform}
		Z^{\al,p}((u(s)^{(k)}\circ \Ph_u(s))\cdot(\Ph_u^{(\ga_1)}(s), \dots, \Ph_u^{(\ga_k)}(s)))
		\le B\|u(s)\|^\cZ_{n,\al,p},
		\end{split}
		\end{align}
		for $k \ge 2$ which implies $\ga_i \le n-1$.
		By Proposition \ref{multilinearbesov}, (ii) and (iii), we get an estimate 
		\begin{align*}
		Z^{\al,p}((u(s)^{(k)}\circ& \Ph_u(s))\cdot(\Ph_u^{(\ga_1)}(s), \dots, \Ph_u^{(\ga_k)}(s))) \le C\|u(s)^{(k)}\|_{0,\al,p}\prod_{i = 1}^{k} \|\Ph_u^{(\ga_i)}(s)\|_{0,\al,p}\\
		&\le B\|u(s)\|^\cZ_{n,\al,p}.
		\end{align*}
		This yields \eqref{gronwn} and, as mentioned before, ODE-closedness of $\cB^{n,\al,p}$.
		Finally, to get continuity in time one observes that $\et(d^n_x \Ph_u(t)-d^n_x \Ph_u(r);\ta)$ can be estimated by the integral from $r$ to $t$ of the same integrand as in \eqref{refineq}. This gives by the same reasoning as above that 
		\begin{equation}
		\label{conteq}
		Z^{\al,p}(d^n_x\Ph_u(t)-d^n_x\Ph_u(r),\de) \le \int_r^t \|u(s)\|^\cZ_{n,\al,p}Z^{\al,p}(d^n_x\Ph_u(s),\de) + B \|u(s)\|^\cZ_{n,\al,p}\,ds,
		\end{equation}
		and Lemma \ref{gronwall} gives 
		\[
		Z^{\al,p}(d^n_x\Ph_u(t)-d^n_x\Ph_u(r),\de) \le B\bigg(\int_r^t\|u(s)\|^\cZ_{n,\al,p}\,ds\bigg)e^{\int_0^1\|u(s)\|^\cZ_{n,\al,p}\,ds},
		\]
		which again is independent from $\de$. Since $\int_r^t\|u(s)\|^\cZ_{n,\al,p}\,ds$ tends to $0$ as $r \rightarrow t$, we get continuity in time. The arguments used above to derive \eqref{gronwn} and \eqref{conteq} worked for $p < \infty$. For $p = \infty$, the situation actually gets simpler. One just has to substitute Minkowski's integral inequality with exchanging $\sup_{\ta > \de}$ with integration. The rest works analogously.
	\end{proof}

	\section{Continuity of the flow mapping}
	
	We investigate regularity properties of the map sending a vector field $u$ to $\on{Fl}(u):=\Ph_u-\Id = \ph_u$, the so-called flow mapping, for Besov vector fields. In order to speak of continuity, we have to fix a topology on domain and codomain. Let us first consider the domain, i.e. the set of vector fields of a certain Besov regularity. In \cite{NenningRainer16}, we were forced to use the smaller space of Bochner integrable (H\"older) vector fields. We will do the same here. So 
	\[
	\on{dom}(\on{Fl}) = L^1(I,\cB^{n,\be,p}),
	\]
	which is a normed space with $\|u\|:= \int_0^1\|u(t)\|_{n,\be,p}\,dt$.
	For a bare minimum of properties of this integrability notion, consult e.g. section 2.2 in \cite{NenningRainer16}.
	
	Now let us consider the codomain. As it is already stated in \cite{NenningRainer16}, we do not know whether we have continuity in the H\"older case without loss of regularity, i.e. of the mapping $\on{Fl}:L^1(I,\cB^{n,\be,\infty}_\cH) \rightarrow C(I,\cB^{n,\be,\infty}_\cH)$. The same problem arises for $p < \infty$. But by using Proposition \ref{embedding} together with the continuity results in \cite{NenningRainer16}, we can almost immediately deduce our second result.
	\begin{namedthm*}{Theorem 2}
		Let $0<\al < \be \le 1$ and $n \in \N_{\ge 2}$. Then 
		\[
		\on{Fl}: L^1(I,\cB^{n,\be,p}) \rightarrow C(I,\cB^{n,\al,p}), ~ u \mapsto \ph_u
		\]
		is continuous, even H\"older continuous of any order $\ga < \be-\al$.
	\end{namedthm*}
	
	\begin{proof}
		Just observe for $0<\varepsilon<(\be-\al)/2$
		\[
		L^1(I,\cB^{n,\be,p}) \hookrightarrow L^1(I,\cB^{n,\be-\varepsilon,\infty}_\cH) \overset{\on{Fl}}{\rightarrow} C(I,\cB^{n,\al+\varepsilon,\infty}_\cH) \hookrightarrow  C(I,\cB^{n,\al,p}),
		\]
		and $\on{Fl}$ is by Theorem 5.6. in \cite{NenningRainer16} $(\be-\al-2\varepsilon)$-H\"older continuous. Since $\varepsilon$ is arbitrary we get H\"older continuity of any order $\ga < \be-\al$.
	\end{proof}

\end{document}